\newtheorem{theorem}{Theorem}[section]
\newtheorem{lemma}[theorem]{Lemma}
\newtheorem{proposition}[theorem]{Proposition}
\theoremstyle{definition}
\theoremstyle{remark}
\newtheorem{problem}[]{Problem}
\newtheorem{observation}{Observation}
\title{Local Antimagic Coloring of Some Graphs}
\author{Ravindra Pawar}
\author{Tarkeshwar Singh}
\author{Adarsh Kumar Handa}
\author{Aloysius Godinho}
\thanks{Ravindra Pawar: \texttt{p20200020@goa.bits-pilani.ac.in}}
\thanks{Tarkeshwar Singh: \texttt{tksingh@goa.bits-pilani.ac.in}}
\thanks{Adarsh Kumar Handa: \texttt{adarsh@pccegoa.edu.in}}
\thanks{Aloysius Godinho: \texttt{aloysius314@gmail.com}}
\begin{document}

\maketitle
\begin{abstract}
Consider a simple graph $G$ without $K_2$ component with vertex set $V$ and edge set $E$. Local antimagic labeling $f$ of $G$ is a one-to-one mapping of edges to distinct positive integers $1,2, \dots, |E|$ such that the weights of adjacent vertices are distinct, where the weight of a vertex is the sum of labels assigned to the edges incident to it. These weights of the vertex induced by local antimagic labeling result in a proper vertex coloring of the graph $G$. We define the local antimagic chromatic number of $G$, denoted as $\chi_{la}(G)$, as the smallest number of distinct weights obtained across all possible local antimagic labelings of $G$. In this paper, we explore the local antimagic chromatic numbers of various classes of graphs, including the union of certain graph families, the corona product of graphs, and the necklace graph. Additionally, we provide constructions for infinitely many graphs for which $\chi_{la}(G)$ equals the chromatic number $\chi(G)$ of the graph.

{\setlength{\parindent}{0cm}\textbf{Keywords:} Antimagic Graph, Local Antimagic Graph, Local Antimagic Chromatic Number.}\\

{\setlength{\parindent}{0cm}\textbf{AMS Subject Classification 2021: 05C 78.}}
\end{abstract}

\section{Introduction}

The coloring problems in Graph Theory are one of the oldest, most widely known, and unsolved problems in mathematics. They have been the central research topic for centuries among graph theorists. Recently Arumugam et al.\cite{lac_arumugam}, and Bensmail et al.\cite{lac_bensmail} independently defined the notion of local antimagic labeling of a graph that induces proper vertex coloring. Arumugam et al.~\cite{lac_arumugam} studied the vertex coloring induced by local antimagic labeling. All the graphs considered throughout this paper are simple graphs without the $K_2$ component. For graph theoretic terminology and notations, we refer to West~\cite{west}.\\

Hartsfield and Ringel \cite{pearls_in_graph_theory} introduced the concept of  antimagic labeling of a graph. Given a graph $G = (V,E)$, let $f : E \rightarrow \{1, 2, \dots, |E|\}$ be a bijection. For each vertex $u \in V$, the weight of $u$ induced by $f$ is $w(u) = \displaystyle\sum_{uv \in E} f(uv)$. If the induced weights under $f$ of any two vertices of $G$ are distinct, then $f$ is called antimagic labeling of $G$, and the graph $G$, which admits such labeling, is called an antimagic graph.\\

An antimagic labeling $f$ of a graph $G$ is said to be \textit{local antimagic} if the weights induced by $f$ of adjacent vertices are distinct. Local antimagic labeling naturally induces a proper vertex coloring of a graph $G$. The \textit{local antimagic chromatic number} $\chi_{la}(G)$ of a graph $G$ is the minimum number of colors used over all colorings of $G$ induced by local antimagic labeling of $G$.\\

In \cite{lac_arumugam}, authors calculated the local antimagic chromatic number of a few families of graphs \textit{viz} path, cycle, wheel, etc. Furthermore, they conjectured that {\it every graph other than $K_2$ is local antimagic}. Haslegrave \cite{haslegrave_proof} proved this conjecture.\\

We will use magic rectangle and magic rectangle sets to obtain local antimagic labelings of some graphs. A \textit{magic rectangle} $MR(a,b)$ is an array whose entries are $\{1,2, \dots, ab\}$, each appearing once, with all its row sums equal to a constant $\rho = \frac{b(ab+1)}{2}$ and all its column sums equal to a constant $\sigma = \frac{a(ab+1)}{2}$. Froncek \cite{mrs1, mrs2} generalized the idea of magic squares to magic rectangle sets. A \textit{magic rectangle set} $\mathcal{M}  = MRS(a, b; c)$ is a collection of $c$ arrays $(a\times b)$ whose entries are elements of $\{1, 2, \dots, abc\}$, each appearing once, with all row sums in every rectangle equals to a constant $\rho = \frac{b(abc+1)}{2}$ and all column sums in every rectangle equal to a constant $\sigma = \frac{a(abc+1)}{2}$.\\

In this paper, we investigate the local antimagic chromatic numbers of the union of some families of graphs, the corona product of graphs, the necklace graph, and we construct infinitely many graphs satisfying $\chi_{la}(G) = \chi(G)$.

\section{Known Results}

The following century-old existence result was given by Harmuth \cite{mr1,mr2}, which gives the necessary and sufficient conditions for the existence of a magic rectangle of a given order.

\begin{theorem}\cite{mr1, mr2} \label{mr}
A magic rectangle $MR(a,b)$ exists if and only if $a,b > 1$, $ab>4$, and $a \equiv b(\bmod\ 2)$.
\end{theorem}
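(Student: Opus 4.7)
The plan is to prove both directions of this \emph{if and only if} separately, starting with the easier necessity and then addressing the constructive sufficiency.

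For necessity, I would first sum all the entries: the total is $\tfrac{ab(ab+1)}{2}$, and dividing by the number of rows gives the required row sum $\rho = \tfrac{b(ab+1)}{2}$, while dividing by the number of columns gives $\sigma = \tfrac{a(ab+1)}{2}$. Both must be integers. Since $ab$ and $ab+1$ have opposite parities, if $a \not\equiv b \pmod 2$ then exactly one of $a,b$ is odd while $ab+1$ is also odd, making the corresponding expression a non-integer. This forces $a \equiv b \pmod 2$. The constraint $a,b>1$ is immediate, since a $1 \times b$ array has single-entry columns and equal column sums would force all entries equal. The only case that the parity condition alone does not exclude under $ab \leq 4$ is $a=b=2$, which I would rule out by direct check: the only partition of $\{1,2,3,4\}$ into row pairs each summing to $\rho=5$ is $\{1,4\},\{2,3\}$, and neither arrangement yields equal column sums.

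For sufficiency, I would split into two cases by the parity of $a$ and $b$. When both are odd, a classical broken-diagonal (Siamese-style) construction places entries along diagonals indexed modulo $a$ and $b$; checking that each row and each column receives exactly one representative of every residue class then yields the required constant sums. When both are even with $ab>4$, I would use a block approach based on the complementary pairs $\{k,\, ab+1-k\}$: assemble the rectangle from $2\times 2$ blocks each of which contributes the same increment to every row sum and every column sum, treating the minimal cases $MR(2,4)$ and $MR(4,4)$ explicitly and extending to larger sizes by juxtaposition with appropriate shifts.

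The main obstacle is the even--even case. The odd--odd construction generalises the Siamese method essentially mechanically, and its verification reduces to a counting argument. The even--even case, in contrast, has no single uniform recipe: the standard proofs typically bifurcate further according to whether $a$ and $b$ are congruent to $0$ or to $2$ modulo $4$, and each subcase requires its own carefully chosen base rectangles to seed the recursion. I would therefore devote most of the effort to exhibiting explicit base rectangles $MR(2,2k)$ and $MR(4,2k)$ for $k \geq 2$, and then verifying that larger even-by-even rectangles can be built from these by block juxtaposition with shifts by multiples of $ab$.
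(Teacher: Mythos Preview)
The paper does not contain a proof of this theorem. It appears in Section~2, titled ``Known Results,'' and is simply quoted from Harmuth with the citation \texttt{[mr1, mr2]}; no argument is given or sketched. So there is no paper proof to compare your proposal against.

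Your outline is a reasonable plan for how one could prove the result from scratch. The necessity direction is essentially complete as written: the integrality of $\rho$ and $\sigma$ forces the parity condition, a single row or column forces $a,b>1$, and the $2\times 2$ case is disposed of by hand. The sufficiency direction is where the real work lies, and your sketch correctly identifies that the even--even case is the delicate one, requiring explicit base constructions and a recursive or block-juxtaposition argument; but as it stands this part is only a plan, not a proof. If you intend to actually write this up, be aware that the historical proofs (Harmuth's and later streamlined versions) are somewhat intricate in the even case, and your description of ``$2\times 2$ blocks of complementary pairs'' would need to be made precise enough to guarantee simultaneously constant row \emph{and} column sums, which a naive pairing does not automatically give.

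For the purposes of this paper, however, no proof is expected: the theorem is being used as a black-box existence result, and citing Harmuth is the appropriate treatment.
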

Froncek \cite{mrs1, mrs2} proved the existence of MRS$(a,b;c)$.  
\begin{theorem} \cite{mrs2} \label{mrs2.3}
Let $a, b, c$ be positive integers such that $1 < a \le b$. Then a magic
rectangle set MRS$(a, b; c)$ exists if and only if either $a, b, c$ are all odd, or $a$ and $b$ are both even, $c$ is arbitrary, and $(a,b) \ne (2,2)$.
\end{theorem}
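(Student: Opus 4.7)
The plan is to establish Theorem~\ref{mrs2.3} by separating necessity (forced by a divisibility/parity count) from sufficiency (an explicit construction), and within sufficiency splitting into the all-odd case and the both-even case.

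For necessity I would start from the identity $b\rho = a\sigma = \tfrac{ab(abc+1)}{2}$, which is the total of all entries in a single rectangle. Requiring $\rho = \tfrac{b(abc+1)}{2}$ and $\sigma = \tfrac{a(abc+1)}{2}$ to be integers immediately rules out the ``mixed parity'' case (exactly one of $a,b$ even), since then $abc+1$ has the wrong parity to compensate. The case $(a,b)=(2,2)$ must be excluded separately: in a $2\times 2$ magic rectangle with row and column sums both $4c+1$, each row and each column is a complementary pair $\{i,4c+1-i\}$, and a short case check shows that these constraints cannot be met simultaneously by the $c$ disjoint rectangles using $\{1,\dots,4c\}$.

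For the all-odd case, I would build the set by a product/cyclic-shift scheme: starting from a magic rectangle $MR(a,b)$ (which exists by Theorem~\ref{mr}) and grouping the integers $1,\dots,abc$ according to residue classes modulo $c$, then using a cyclic shift on the ``layers'' to rebalance row and column sums. For the both-even case the cleanest construction uses \emph{complementary pairs}: since $b$ is even, each row of length $b$ is partitioned into $b/2$ pairs $\{x,abc{+}1{-}x\}$, and each such pair contributes $abc+1$ to the row sum automatically, giving $\rho=\tfrac{b(abc+1)}{2}$; the same argument handles columns. The combinatorial content is then to distribute the $abc/2$ available complementary pairs across the $c$ rectangles so that every row and every column of every rectangle receives exactly the right complementary pairs, with the global partition of $\{1,\dots,abc\}$ respected.

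The main obstacle is the sufficiency direction, specifically coordinating the $c$ rectangles so that they simultaneously use disjoint entries, cover $\{1,\dots,abc\}$, and each achieve the same $\rho,\sigma$. The all-odd case is delicate because complementary pairing is unavailable when every row length is odd, forcing a cyclic/difference argument rather than a symmetric one, and small instances near the boundary (small $a$, small $c$) have to be checked by hand to confirm the general construction applies. The $(a,b)=(2,2)$ obstruction warns that degenerate base cases can genuinely fail, so any unified construction needs an explicit verification that its hypotheses exclude exactly this configuration.
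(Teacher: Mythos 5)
This statement is quoted from Froncek \cite{mrs2} and the paper gives no proof of it, so there is nothing internal to compare against; I can only assess your proposal on its own terms. Your necessity direction is correct and essentially complete: integrality of $\rho=\tfrac{b(abc+1)}{2}$ and $\sigma=\tfrac{a(abc+1)}{2}$ rules out mixed parity of $a,b$ and also the case $a,b$ odd with $c$ even (since then $abc+1$ is odd), and the $(2,2)$ exclusion is even quicker than your complementary-pair check: there $\rho=\sigma$, and equating the first row sum with the first column sum in a $2\times 2$ array forces $a_{12}=a_{21}$, contradicting distinctness of entries.

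The sufficiency direction, however, has a genuine gap: the constructions are named but not given, and the step that carries all the difficulty is waved through. In the both-even case, writing each row as $b/2$ complementary pairs does give $\rho$ automatically, but your sentence ``the same argument handles columns'' is not an argument --- a pair placed in one row occupies two distinct columns, so you cannot simultaneously decree that every row and every column is a union of complementary pairs; the column sums impose an independent constraint that must be met by an explicit placement (this is precisely where Froncek's construction does its work, and where the $(2,2)$ degeneracy must visibly fail). Likewise, in the all-odd case ``a cyclic shift on the layers'' preserves row contents while permuting entries among columns, so equality of column sums across all $c$ rectangles requires an actual base array and a verified computation, plus hand-checked small cases such as $MRS(3,3;c)$. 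As it stands your proposal is a correct skeleton with one half proved and the other half deferred to constructions you have not exhibited; to make it a proof you would need to write down the arrays (or cite Froncek's, as this paper does).
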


\begin{theorem}\cite{lac_pendent_lau}\label{th:lac_bound}
Let $G$ be a graph having $k$ pendants. If $G$ is not $K_2$, then $\chi_{la}(G) \ge k + 1$ and the bound is sharp.
\end{theorem}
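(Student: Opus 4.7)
The plan is to argue the lower bound by identifying $k+1$ forced distinct weight values, and then exhibit a simple family of graphs meeting the bound.

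First I would observe that the $k$ pendants automatically receive $k$ pairwise distinct weights under any local antimagic labeling $f$. Indeed, if $v$ is a pendant with unique incident edge $uv$, then $w(v) = f(uv)$, so the pendant weights are exactly the $f$-labels on the $k$ pendant edges, and since $f$ is injective these are $k$ distinct positive integers.

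Next I would produce a $(k+1)$-st color using a single well-chosen non-pendant vertex. Let $v^\ast$ be the pendant whose incident edge has the largest label among all pendant edges, say $f(u^\ast v^\ast) = t_{\max}$; so $w(v^\ast) = t_{\max}$ is the maximum pendant weight. Since $G$ has no $K_2$ component, the support vertex $u^\ast$ has degree at least $2$, hence at least one further edge is incident to $u^\ast$ and contributes a label $\ge 1$. Therefore
\[
w(u^\ast) \;\ge\; f(u^\ast v^\ast) + 1 \;=\; t_{\max} + 1 \;>\; t_{\max},
\]
so $w(u^\ast)$ strictly exceeds every pendant weight and provides the $(k+1)$-st color. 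Combining this with the $k$ distinct pendant weights gives $\chi_{la}(G) \ge k+1$.

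For sharpness I would point to the star $K_{1,n}$ with $n \ge 2$, which has exactly $k = n$ pendants. The unique (up to permutation) labeling $f: E(K_{1,n}) \to \{1, \dots, n\}$ assigns weights $1, 2, \dots, n$ to the leaves and $n(n+1)/2$ to the center, realizing $n+1$ distinct colors and matching the lower bound. The only subtle point in the argument is verifying that the support vertex $u^\ast$ really has degree at least $2$, which is where the hypothesis that $G$ contains no $K_2$ component is used; this is the one place where the ``not $K_2$'' assumption in the statement is essential.
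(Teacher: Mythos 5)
Your argument is correct; note that the paper itself gives no proof of this statement, quoting it from the cited reference, and your proof is essentially the standard one used there: the $k$ pendant weights equal the (distinct) labels of the pendant edges, and the support vertex of the pendant carrying the largest pendant-edge label has weight strictly exceeding every pendant weight, yielding the $(k+1)$-st color, with $K_{1,n}$ showing sharpness. Two minor points you could flag: the case $k=0$ is trivial (so the choice of $v^\ast$ is legitimate whenever it is needed), and strictly speaking your degree argument uses the paper's standing convention that $G$ has no $K_2$ component, which is how the hypothesis ``$G$ is not $K_2$'' should be read.
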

\begin{theorem}\cite{lac_union_baca}\label{baca1}
Let $G$ be a $4r$-regular graph, $r \ge 1$. Then for every positive integer $m, \chi_{la}(mG) \le \chi_{la}(G)$.
\end{theorem}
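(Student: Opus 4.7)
The plan is to take an optimal local antimagic labeling $f\colon E(G)\to\{1,\ldots,q\}$ of $G$ (where $q=|E(G)|=2rn$ with $n=|V(G)|$) and construct from it an explicit labeling of $mG$ that uses at most $k:=\chi_{la}(G)$ distinct vertex weights. Write the $k$ weights induced by $f$ as $c_1,\ldots,c_k$.

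The key structural ingredient is Petersen's 2-factor theorem: every $2\ell$-regular graph decomposes into $\ell$ edge-disjoint 2-factors. Applied to our $4r$-regular graph $G$, this yields $2r$ edge-disjoint 2-factors, and grouping them into two blocks of $r$ 2-factors each gives a decomposition $E(G)=E(H_1)\sqcup E(H_2)$ into two edge-disjoint $2r$-regular spanning subgraphs. I would fix this decomposition and use it to distinguish the roles of edges in the labeling.

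I would then label the edges of $mG$, with copies indexed $1,\ldots,m$, as follows: for the edge in copy $i$ corresponding to an edge $e\in E(G)$ with $f(e)=j$, define
\[
    F(e)\;=\;\begin{cases}
        m(j-1)+i, & e\in H_1,\\[2pt]
        m(j-1)+(m+1-i), & e\in H_2.
    \end{cases}
\]
For a fixed $j$, as $i$ ranges over $\{1,\ldots,m\}$, each of the two expressions ranges bijectively over the block $\{m(j-1)+1,\ldots,mj\}$, and these $q$ blocks partition $\{1,\ldots,mq\}$; hence $F$ is a bijection $E(mG)\to\{1,\ldots,mq\}$. Using that every vertex of $G$ has exactly $2r$ incident edges in each of $H_1$ and $H_2$, the weight of a vertex $v$ in copy $i$ collapses to
\[
    W_i(v)\;=\;m\bigl(w(v)-4r\bigr) + 2r\cdot i + 2r(m+1-i)\;=\;m\,w(v) - 2r(m-1),
\]
which is independent of $i$. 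Consequently, the set of all vertex weights in $mG$ is contained in $\{mc_j - 2r(m-1): 1\le j\le k\}$, a set of size at most~$k$, and properness is inherited from $f$ because adjacency in $mG$ is confined to a single copy and $x\mapsto mx-2r(m-1)$ is injective.

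The main thing to be careful about is spotting the right construction: splitting $E(G)$ into two $2r$-regular pieces and pairing the ``forward'' shift $i$ on $H_1$ with the ``backward'' shift $m+1-i$ on $H_2$ is what forces the $i$-dependence in $W_i(v)$ to cancel exactly, exploiting that each vertex contributes $2r$ summands from each side. Once the 2-factor decomposition is invoked via Petersen's theorem, both the bijectivity check and the weight computation are routine, and no parity or existence issues from Theorems~\ref{mr} or~\ref{mrs2.3} are actually needed for this bound.
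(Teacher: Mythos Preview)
The paper does not give its own proof of this statement: Theorem~\ref{baca1} is quoted in Section~2 (``Known Results'') from \cite{lac_union_baca} without argument, so there is no in-paper proof to compare against. That said, your proposal is correct and is essentially the standard argument for this result. Petersen's $2$-factor theorem gives the decomposition $E(G)=E(H_1)\sqcup E(H_2)$ into two $2r$-regular spanning subgraphs; the forward/backward block shift $j\mapsto m(j-1)+i$ on $H_1$ and $j\mapsto m(j-1)+(m+1-i)$ on $H_2$ is a bijection onto $\{1,\ldots,mq\}$ as you argue; and the weight computation
\[
W_i(v)=m\bigl(w(v)-4r\bigr)+2r\,i+2r(m+1-i)=m\,w(v)-2r(m-1)
\]
is correct and copy-independent, so the induced coloring uses at most $\chi_{la}(G)$ colors and is proper since $x\mapsto mx-2r(m-1)$ is injective. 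One minor point worth making explicit: Petersen's theorem applies componentwise to a possibly disconnected $4r$-regular $G$, so the decomposition exists without any connectedness assumption.
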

\begin{theorem}\cite{lac_union_baca}\label{baca2}
Let $G$ be a $(4r+2)$-regular graph, $r \ge 0$,  containing a $2$-factor consisting only of even cycles. Then for every positive integer $m, \chi_{la}(mG) \le \chi_{la}(G)$.
\end{theorem}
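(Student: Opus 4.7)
The plan is to lift an optimal labeling of $G$ to $mG$ so that the weights at every vertex of $mG$ equal the corresponding weight in $G$ shifted by a single universal constant. Fix a local antimagic labeling $f : E(G) \to \{1, \ldots, |E(G)|\}$ of $G$ realising $\chi_{la}(G)$ colors, let $G^{(1)}, \ldots, G^{(m)}$ be the copies in $mG$, and write $v^{(i)}$ for the image of $v$ in $G^{(i)}$. If I can construct a bijection $f^{\ast} : E(mG) \to \{1, \ldots, m|E(G)|\}$ with $w_{f^{\ast}}(v^{(i)}) = w_G(v) + C$ for a single constant $C$, then the $f^{\ast}$-weights form the set $\{w_G(v) + C : v \in V(G)\}$ of size $\chi_{la}(G)$, and properness transfers automatically since any edge of $mG$ lies inside a single copy of $G$, where adjacent $v$, $u$ already satisfy $w_G(v) \neq w_G(u)$.

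The first step is to decompose $E(G) = E(G') \sqcup E(F)$, where $F$ is the prescribed $2$-factor of even cycles and $G'$ is $4r$-regular. Correspondingly I partition $\{1, \ldots, m|E(G)|\}$ into a lower block of size $m|E(F)|$ and an upper block of size $m|E(G')|$. On the $4r$-regular portion $mG'$ I apply the construction behind Theorem~\ref{baca1} to the upper block, which produces at each $v^{(i)}$ a partial weight of the form $w_{G'}(v) + C_1$ with $C_1$ an explicit constant. For the even $2$-factor $mF$, each even cycle $C_{2l}$ of $F$ splits naturally into two perfect matchings, each with $l$ edges; in the $m$ copies of $C_{2l}$ inside $mF$ this gives two classes of $ml$ edges, and a magic rectangle set $\mathrm{MRS}(2, l; m)$ (supplied by Theorem~\ref{mrs2.3}, with an ad hoc construction in the excluded case $l = 2$) lets me assign the labels of the lower block so that each vertex of each cycle in each copy gets a uniformly shifted contribution $w_F(v) + C_2$. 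Adding the two stages yields $w_{f^{\ast}}(v^{(i)}) = w_G(v) + C_1 + C_2$, as required.

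The main obstacle lies in the even-cycle stage, where the uniform shift $C_2$ must be simultaneously produced at every vertex of every copy. The row-sum and column-sum properties of the magic rectangle set are exactly what makes this possible: they allow the two perfect matchings of each cycle to be consistently labeled across all $m$ copies so that the two cycle-edge labels meeting at any vertex always pair up to the right constant. This is also precisely where the parity assumption on the cycles is used, since odd cycles do not admit a $2$-matching decomposition. Handling the edge case $(a,b) = (2,2)$ excluded from Theorem~\ref{mrs2.3}, coordinating the two label blocks to avoid collisions, and checking that $C_1$ and $C_2$ combine to the same $C$ at every vertex (which ultimately reduces to the regularity of $G'$ and the constancy of the MRS column sums) form the technical heart of the argument; the rest is bookkeeping.
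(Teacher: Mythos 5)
This theorem is only quoted in the paper (from Ba\v{c}a et al.), so your attempt has to stand on its own, and it has a genuine gap at its core: neither stage of your construction delivers the contribution you claim. The lifting behind Theorem~\ref{baca1} does not produce copy-weights of the form $w_{G'}(v)+C_1$. In that construction the $m$ copies of an edge carrying label $k$ receive a block of $m$ consecutive labels, and the weight of $v^{(i)}$ comes out as $m\cdot w'(v)+\mathrm{const}$, where $w'$ is the weight under whatever labeling of $G'$ was lifted --- an affine function of slope $m$, not a unit shift. Worse, by relabelling $E(G')$ with an ``upper block'' you sever all connection with the original labeling $f$ of $G$, so even if each stage produced a constant shift of something, the two contributions could not sum to $w_G(v)+C$. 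The uniform-shift target is also unnecessary: weights of the form $m\,w_G(v)+c$ already suffice, since an injective affine map preserves both properness inside each copy and the number of distinct weights.

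The even-cycle stage fails for a second, independent reason. If you place the two rows of a $2\times\ell$ rectangle on the two perfect matchings of a copy of $C_{2\ell}$, only every second vertex of the cycle receives the two entries of a single column; the other $\ell$ vertices receive entries from two different columns, and neither the row-sum nor the column-sum property makes those sums constant, so the promised contribution $w_F(v)+C_2$ is not produced (and it would in any case be detached from the $f$-weight $w_F(v)$). Moreover, by Theorem~\ref{mrs2.3} an MRS$(2,\ell;m)$ exists only when $\ell$ is even, so every cycle of length $\equiv 2 \pmod 4$ (already $C_6$) lies outside your toolkit --- a much larger excluded case than $(2,2)$. The known proof instead keys everything to the original labels: the $m$ copies of the edge labelled $k$ get the block $\{(k-1)m+1,\dots,km\}$, and within each block the copy $G^{(j)}$ gets offset $j$ or $m+1-j$; on the $4r$-regular part this is arranged by orienting its $2r$ two-factors (out-edges get $j$, in-edges $m+1-j$), and on the prescribed two-factor by alternating $j$ and $m+1-j$ along each cycle, which is precisely where evenness of the cycles is used. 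Then every vertex of every copy sees $2r+1$ offsets $j$ and $2r+1$ offsets $m+1-j$, so $w(v^{(j)})=m\,w_G(v)-(4r+2)m+(2r+1)(m+1)$, a fixed injective affine image of $w_G(v)$, giving at most $\chi_{la}(G)$ colors. Your intuition that the even cycles are what make the parity bookkeeping possible is right, but the magic-rectangle-set mechanism you chose cannot implement it.
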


\section{Local Antimagic Labeling of Union of Graphs}

With the knowledge of local antimagic chromatic numbers of various classes of well-known graphs, the researchers started calculating local antimagic chromatic numbers for graphs obtained from known graphs \cite{lac_corona_arumugam, lac_join_lau, lac_lau, lac_corona_sugeng}. Handa et al. \cite{adarsht} started by investing the local antimagic chromatic number of the union of paths, cycles, and complete bipartite graphs.\\

Bača et al.~\cite{lac_union_baca} investigated independently the local antimagic chromatic number and upper bounds for the union of paths, the union of cycles, the union of trees, and other graphs and their proof techniques are different from the proof techniques given in this paper.\\

 The union of two graphs $G_1 = (V_1,\  E_1)$ and $G_2 = (V_2,\ E_2)$ is the graph $G = (V,\ E)$ with vertex set $V = V_1 \cup V_2$ and edge set  $E = E_1 \cup E_2$.\\

Note that, if $G_1, G_2, \dots , G_n$ are graphs such that $\chi(G_i)=\chi_{i}$ then $\chi(\bigcup_i G_i)= \mbox{max}\{ \chi_{i}: 1 \leq i \leq n\}$. We have the following observation for the local antimagic chromatic number.
\begin{observation} \label{obs: boundunion}
For the graphs $G_1, G_2, \dots , G_n$, $\chi_{la} (G_j) \le \chi_{la}(\bigcup_{1 \le i \le n} G_i)$ for each $j$.
\end{observation}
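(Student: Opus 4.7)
The plan is to restrict an optimal local antimagic labeling of the union to the component $G_j$, and then canonicalize its label set.

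First, I would fix a local antimagic labeling $f : E(G) \to \{1, 2, \dots, |E(G)|\}$ of $G := \bigcup_{i=1}^n G_i$ that realizes the minimum number of colors $k := \chi_{la}(G)$. The next step is to consider the restriction $f_j := f|_{E(G_j)}$. Since the components $G_i$ in a disjoint union share no edges, every vertex $v \in V(G_j)$ has the same incident edge set in $G$ as in $G_j$; hence its weight under $f_j$ equals its weight under $f$. Consequently, adjacent vertices of $G_j$ continue to receive distinct weights, and the palette of colors appearing on $V(G_j)$ is a subset of the $k$ colors induced by $f$.

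To match the definition, which requires the label set of a local antimagic labeling to be exactly $\{1, 2, \dots, |E(G_j)|\}$, I would then canonicalize: the labels used by $f_j$ form a subset of size $m_j := |E(G_j)|$ inside $\{1, \dots, |E(G)|\}$, and replacing each label by its rank among these $m_j$ values yields a bijection $g_j : E(G_j) \to \{1, \dots, m_j\}$. This $g_j$ is the desired candidate local antimagic labeling of $G_j$, and it plainly induces at most $k$ distinct vertex weights (since the rank map is a bijection and there were already at most $k$ colors among the weights of $V(G_j)$).

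The main obstacle is verifying that $g_j$ itself still has the local antimagic property, i.e., that the rank relabeling does not create a weight collision between adjacent vertices of $G_j$. A rank relabeling need not, in general, preserve pairwise distinctness of vertex weights, so some care is required here. I would handle this step by exploiting the flexibility of the disjoint union: one can permute the underlying optimal labeling $f$ so that $E(G_j)$ receives exactly the canonical label set $\{1, \dots, m_j\}$ while the color count on $G$ remains $k$, after which $g_j$ coincides with $f_j$ and is automatically local antimagic. This rearrangement step is the only non-routine point; the rest of the argument is immediate from the fact that restriction to a union component preserves vertex weights.
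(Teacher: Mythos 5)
The paper offers no argument for this Observation at all---it is simply asserted---so the only question is whether your proposal actually closes it, and it does not. You correctly locate the difficulty: the restriction $f_j$ of an optimal labeling $f$ of $\bigcup_i G_i$ to $E(G_j)$ is injective and separates adjacent weights, but its image is an arbitrary $m_j$-subset of $\{1,\dots,|E(G)|\}$ rather than $\{1,\dots,m_j\}$, and the rank (order-preserving) relabeling $g_j$ need not preserve distinctness of sums. (Concretely, if $u$ is incident to edges labeled $1$ and $10$ and an adjacent $v$ to edges labeled $2$ and $8$, and the labels used on $G_j$ are $\{1,2,8,10\}$, then after ranking both vertices get weight $5$.) Your proposed repair---``permute the underlying optimal labeling $f$ so that $E(G_j)$ receives exactly $\{1,\dots,m_j\}$ while the color count on $G$ remains $k$''---is not an argument but a restatement of what needs to be proved. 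Any permutation of labels that moves the canonical block $\{1,\dots,m_j\}$ onto $E(G_j)$ changes every vertex weight in every component, and nothing guarantees the result is still local antimagic, let alone that it still induces only $k$ colors; the ``flexibility of the disjoint union'' is never cashed out. If this rearrangement were available, the whole statement would be immediate, so the one non-routine step carries the entire burden and is unsupported.

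This is a genuine gap, not a presentational one: the statement is not a triviality, precisely because $\chi_{la}$ is defined relative to the rigid label set $\{1,\dots,|E|\}$, and a component of a union has strictly more freedom (any $m_j$-subset of a larger label pool) than the standalone graph does. What restriction does give for free is only that the weights on $V(G_j)$ form a proper coloring, i.e.\ $\chi_{la}\bigl(\bigcup_i G_i\bigr) \ge \max_i \chi(G_i)$, which is weaker than the claimed bound (e.g.\ it yields only $2$, not $3$, as a lower bound for $rC_n$ with $n$ even). To salvage the stated inequality you would need either a construction converting a $k$-color labeling of the union into a $k$-color labeling of $G_j$ with the correct label set, or a structural argument bypassing labels entirely; neither is present in your proposal or in the paper.
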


\begin{theorem}
The graph $rP_n$ is local antimagic with $3 \le \chi_{la}(rP_n)\leq 2r+2$.
\end{theorem}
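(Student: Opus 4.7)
For the lower bound, Observation~\ref{obs: boundunion} gives $\chi_{la}(rP_n)\ge \chi_{la}(P_n)$, and it is known (and readily verified by contradiction from the distinctness of edge labels) that $\chi_{la}(P_n)\ge 3$ for every $n\ge 3$.

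For the upper bound I would exhibit an explicit local antimagic labeling. Write the $i$th copy of $P_n$ as $v_{i,1}v_{i,2}\cdots v_{i,n}$ with edges $e_{i,j}=v_{i,j}v_{i,j+1}$. The $2r$ pendant vertices each carry a weight equal to a single distinct edge label, so they contribute at most $2r$ colours automatically; it therefore suffices to force every internal vertex to lie in a two-element weight set $\{A,B\}$. Imposing the consecutive pair-sum conditions
\[
f(e_{i,2j-1})+f(e_{i,2j})=A \qquad\text{and}\qquad f(e_{i,2j})+f(e_{i,2j+1})=B
\]
along every path forces the odd-position labels within each path to form an arithmetic progression of common difference $B-A$, and the even-position labels an arithmetic progression of common difference $A-B$. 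Taking this common difference to be $r$ lets the odd-position labels across all $r$ paths pack exactly into the consecutive block $\{1,2,\ldots,mr\}$ (where $m$ is the number of odd positions per path) and the even-position labels into the complementary block. Starting the $i$th path's odd progression at $i$ and shifting the even progression so that $a_i+c_i$ is independent of $i$ fixes $A$, and then $B=A+r$. A direct computation shows $A$ and $B$ both exceed every pendant weight, so the induced colouring uses at most $2r+2$ distinct weights: $2r$ pendant weights plus $A$ and $B$.

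The main technical point is the parity split on $n$. For $n$ odd each path has $m=(n-1)/2$ odd and $m$ even positions, so the two blocks have equal size $mr$; for $n$ even each path has $m=n/2$ odd positions but only $m-1$ even positions, so the even-position block has size $(m-1)r$ and the starting value of the decreasing progression on the even side must be shifted accordingly to keep $A$ constant across paths. In both cases the remaining checks---that $f$ is a bijection onto $\{1,\ldots,r(n-1)\}$, that adjacent vertices receive distinct weights, and that all pendant weights fall strictly below $\min(A,B)$ and are pairwise distinct---reduce to a straightforward arithmetic verification.
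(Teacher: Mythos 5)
Your proposal follows essentially the same route as the paper: the lower bound via Observation~\ref{obs: boundunion} together with $\chi_{la}(P_n)=3$, and the upper bound via an explicit labeling in which every internal vertex receives one of two weights $A,B$ while the $2r$ endpoints carry single edge labels, the only difference being that you interleave labels across the $r$ copies (arithmetic progressions of common difference $r$) where the paper allots consecutive blocks of labels to each copy. One small caution: your claim that every pendant weight lies strictly below $\min(A,B)$ is stronger than needed and can fail (in the paper's own labeling for odd $n$ the endpoint $v_0^0$ gets weight exactly $r(n-1)$, which equals one of the two internal weights); what actually must be verified is only that each endpoint's weight differs from that of its unique neighbour, and any coincidence among the at most $2r+2$ weight values only lowers the colour count, so the stated bound is unaffected.
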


\begin{proof}
Let $ V(rP_n) = \{v_i^j : 0\leq j\leq r-1, 0\leq i\leq n-1\}$ be the vertex set of $rP_n$, where for each $j$, $v_0^j, v_1^j,\dots,v_{n-1}^j$
is a path. The lower bound is obvious from the observation \ref{obs: boundunion}. For the upper bound, we consider the following two cases.\\

{\bf Case 1:} $n$ is even.\\

We define edge labeling $ f:E\to \{1,2,\dots,nr-r\}$ as follow:
$$f(v_i^jv_{i+1}^j) =\begin{cases}

(r-\frac{j}{2})(n-1)- \frac{i}{2} &\mbox{if } i,j\equiv 0(\bmod\ 2)\\

  \frac{j}{2}(n-1) +\frac{i+1}{2} & \mbox{if } i\equiv 1(\bmod\ 2), j\equiv 0(\bmod\ 2)\\
 
  \frac{(j-1)}{2})(n-1)+ \frac{i+1}{2}+ \frac{n-1}{2} &\mbox{if } i\equiv 0(\bmod\ 2),  j\equiv 1(\bmod\ 2)\\
 
 (r-\frac{j-1}{2})(n-1)- \frac{i-1}{2}
 - \frac{n}{2} &\mbox{if } i,j\equiv 1(\bmod\ 2).
 \end{cases}$$ 
 
Then the induced vertex weights are  as follows:

$i\neq 0$ and $i \ne n-1$,
$$ w(v_i^j) =\begin{cases}

r(n-1) &  \mbox{if } i\equiv j(\bmod\ 2) \\
r(n-1)+1 & \mbox{if } i\not\equiv j(\bmod\ 2).
\end{cases}$$

For $i= 0$
$$ w(v_0^j) =\begin{cases}
(r-\frac{j}{2})(n-1) & \mbox{if } j\equiv 0(\bmod\ 2)\\

(n-1)(\frac{j-1}{2})+\frac{n}{2} & \mbox{if } j\equiv 1(\bmod\ 2).
\end{cases}$$
  
  For $i=n-1$
$$ w(v_{n-1}^j) =\begin{cases}

\frac{r - \frac{j}{2}}{2}(n-1)-\frac{n-2}{2} & \mbox{if } j\equiv 0(\bmod\ 2)\\

(n-1)(\frac{j+1}{2}) & \mbox{if } j\equiv 1(\bmod\ 2).
\end{cases}$$
  
  {\bf Case 2:} $n$ is odd.\\
  
We define edge labeling $ f:E\to \{1,2,\dots,nr-r\}$ as follow:
$$f(v_i^jv_{i+1}^j) =\begin{cases}
r(n-1)- \frac{j(n-1)}{2}-\frac{i}{2} &\mbox{if } i\equiv 0(\bmod\ 2)\\
\frac{j(n-1)}{2} +\frac{i+1}{2} & \mbox{if } i\equiv 1(\bmod\ 2).
\end{cases}$$ 

The induced vertex weights are  as follows:
$i\neq 0$ and $i \ne n-1$,
$$ w(v_i^j) =\begin{cases}

r(n-1) &  \mbox{if } i\equiv 0(\bmod\ 2) \\
r(n-1)+1 & \mbox{if } i\equiv 1(\bmod\ 2).
\end{cases}$$

$$ w(v_i^j) =\begin{cases}
r(n-1) - \frac{j(n-1)}{2}& \mbox{if }  i = 0 \\

(\frac{n-1}{2})(j+1) & \mbox{if } i= n-1.
\end{cases}$$
  
Since we have $2r+2$ distinct vertex weights, we conclude that $\chi_{la}(rP_n)\leq 2r+2$.
\end{proof}

Next, we investigate the local antimagic chromatic number for the union of cycles. Let the vertex set of $rC_n$ be
$ V(rC_n) = \{v_i^j : 1\leq j\leq r,~~  0\leq i\leq n-1\}$
where for each $j$, $v_0^j, v_1^j,\dots,v_{n-1}^j$
is a cycle of length $n$.
 
\begin{lemma}{\label{cycle1}}
If $n$ is even then the graph $rC_n$ is local antimagic with $\chi_{la}(rC_n)=3$.
\end{lemma}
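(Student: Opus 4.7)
My plan is to establish the lower bound by ruling out two-color local antimagic labelings on a single even cycle, then lift this to $rC_n$ via Observation \ref{obs: boundunion}; for the upper bound, I will produce an explicit three-weight labeling of $rC_n$.

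For the lower bound, suppose a local antimagic labeling of $C_n$ (with $n=2m$) uses only two weight-values $A$ and $B$. Since $C_n$ is $2$-chromatic, the weights must alternate $A,B,A,B,\dots$ around the cycle. Writing $w(v_i)=e_{i-1}+e_i$, I would unroll the recurrences $e_0+e_1=B$, $e_1+e_2=A,\dots$ to obtain $e_{2k}=e_0+k(A-B)$ for $0\le k\le m-1$. Closing the cycle via $e_{n-1}+e_0=A$ collapses this to $mB=mA$, forcing $A=B$, a contradiction. Hence $\chi_{la}(C_n)\ge 3$, and Observation \ref{obs: boundunion} then yields $\chi_{la}(rC_n)\ge 3$.

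For the upper bound, the same recurrence analysis dictates what a three-weight labeling must look like on each cycle: the even-indexed edge labels $f(e_0^j), f(e_2^j),\dots,f(e_{n-2}^j)$ must form an arithmetic progression of length $m$, and the odd-indexed labels must form another AP with the opposite common difference. I plan to take common difference $D=r$ and starting values $a^j=j$ on the $j$th cycle; explicitly, define
\[
f(e_{2k}^j) = j + kr, \qquad f(e_{2k+1}^j) = rn + 1 - j - kr,
\]
for $1\le j\le r$ and $0\le k\le m-1$. The even-indexed labels across all cycles tile $\{1,\dots,mr\}$ and the odd-indexed labels tile the complementary block $\{mr+1,\dots,rn\}$, so $f$ is a bijection onto $\{1,\dots,rn\}$. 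A routine calculation then shows that each cycle contributes exactly the same three vertex weights: $(m+1)r+1$ at $v_0^j$, $(2m+1)r+1$ at each $v_{2k}^j$ with $k\ge 1$, and $rn+1$ at each $v_{2k+1}^j$.

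The main obstacle is pinpointing the common difference $D=r$ that makes the two families of arithmetic progressions across the $r$ cycles tile $\{1,\dots,rn\}$ cleanly; once that choice is fixed, verifying that the three weights are pairwise distinct (using $m\ge 2$, i.e.\ $n\ge 4$) and that adjacent vertices always fall in different weight classes is just bookkeeping.
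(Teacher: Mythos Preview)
Your proof is correct and follows essentially the same strategy as the paper: the lower bound comes from $\chi_{la}(C_n)=3$ together with Observation~\ref{obs: boundunion} (the paper simply cites this, whereas you supply the short recurrence argument), and the upper bound from an explicit three-weight labeling in which the even- and odd-indexed edges on each cycle form paired arithmetic progressions, leaving $v_0^j$ as the lone exceptional vertex. The only difference is the choice of progression: the paper takes common difference $1$ so that each cycle occupies a contiguous block of labels, while you take common difference $r$ so that the labels interleave across cycles---both choices tile $\{1,\dots,rn\}$ and produce exactly three weights.
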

\begin{proof}
By Observation \ref{obs: boundunion}, $\chi_{la}(C_n) = 3 \le \chi_{la}(rC_n)$. So it is sufficient to give a local antimagic labeling that induces exactly $3$ distinct weights. Consider the following edge labeling $f$ as 

\[f(v_i^jv_{i+1}^j) =   \begin{cases}
                        \frac{(j-1)n}{2}+ \frac{i}{2} +1 & \mbox{if } i\equiv 0(\bmod\ 2)\\

                        rn-\frac{(j-1)n}{2}- \frac{i-1}{2} & \mbox{if } i\equiv 1(\bmod\ 2)
                        \end{cases}\] 
Then the induced vertex weights are  as follows
\[ w(v_i^j) =   \begin{cases}
                rn+1 &  \mbox{if } i\equiv 1(\bmod\ 2), i\neq 0\\
                rn+2 & \mbox{if } i\equiv 0(\bmod\ 2), i\neq 0\\
                rn+\frac{4-n}{2} & \mbox{if } i=0
                \end{cases}\]
Hence $f$ is local antimagic labeling, and it induces $3$ weights as required.
\end{proof}
The illustration of local antimagic labeling of $2C_6$ is shown in Figure \ref{2C6}.
  \begin{figure}[ht]
        \centering
        \includegraphics[scale=0.6]{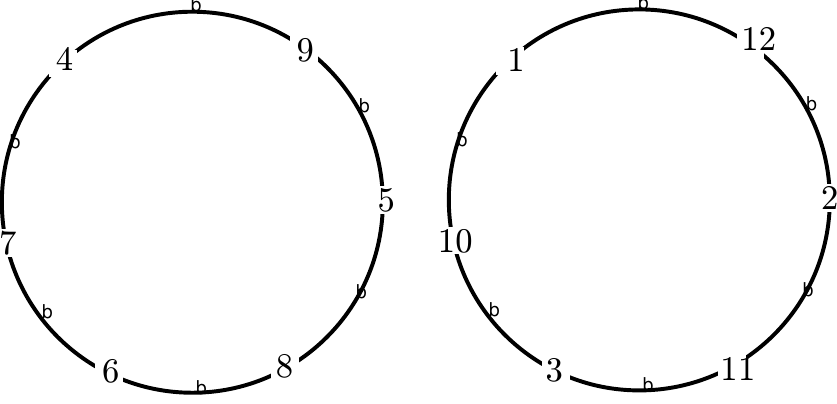}
        \caption{A local antimagic labeling of $2C_6$ with $\chi_{la}(2C_6) = 3$.}
        \label{2C6}
    \end{figure}

We give an upper bound on the local antimagic chromatic number for the union of odd-length cycles.
\begin{lemma}{\label{cycle2}}
    If $n$ is odd then the graph $rC_n$ is local antimagic with $\chi_{la}(rC_n)\leq r+2$.
\end{lemma}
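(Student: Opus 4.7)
My plan is to exhibit an edge labeling of $rC_n$ that induces exactly $r+2$ distinct vertex weights: two ``universal'' weights $A$ and $B$ shared across all cycles, together with one cycle-specific weight $W_j$ attached to the designated vertex $v_0^j$ of each cycle. Since $n$ is odd, both $e_0^j = v_0^j v_1^j$ and $e_{n-1}^j = v_{n-1}^j v_0^j$ have even index, so if I set up an ascending arithmetic progression on the even-indexed edges of cycle $j$ and a descending one on the odd-indexed edges, the two labels incident to $v_0^j$ come from the same (ascending) AP, while every other vertex sees one ascending and one descending label. Choosing the common difference to equal $r$ is the key device that allows the APs across different cycles to interleave and exactly tile $\{1, 2, \dots, rn\}$.

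Concretely, I would define
\[
f(v_i^j v_{i+1}^j) \;=\; \begin{cases} j + \tfrac{i}{2}\, r & \text{if } i \text{ is even,} \\[2pt] rn + 1 - j - \tfrac{i-1}{2}\, r & \text{if } i \text{ is odd.} \end{cases}
\]
As $j$ runs over $\{1,\dots,r\}$ and $k := i/2$ over $\{0,1,\dots,(n-1)/2\}$, the value $j + kr$ hits every integer in $\{1,\dots, r(n+1)/2\}$ exactly once; similarly, the odd-index formula hits every integer in $\{r(n+1)/2 + 1, \dots, rn\}$ exactly once. Hence $f$ is a bijection onto $\{1,\dots,rn\}$. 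A direct sum of adjacent labels at each vertex then gives
\[
w(v_i^j) \;=\; \begin{cases} A := rn+1 & \text{if } i \text{ is odd,}\\ B := rn+r+1 & \text{if } i \text{ is even and } i \ne 0,\\ W_j := 2j + \tfrac{r(n-1)}{2} & \text{if } i = 0. \end{cases}
\]

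To close out the proof, I would check that these values yield a proper coloring with at most $r+2$ colors. The $W_j$ are pairwise distinct (they differ by $2$), and the estimate $W_j \le W_r = r(n+3)/2 < rn + 1 = A$, valid for all $n \ge 3$, guarantees $\{W_1, \dots, W_r\} \cap \{A, B\} = \varnothing$. Properness follows immediately: along the path $v_1^j, v_2^j, \dots, v_{n-1}^j$ consecutive vertices alternate between the distinct values $A$ and $B$, while the special vertex $v_0^j$ has only $v_1^j$ (weight $A$) and $v_{n-1}^j$ (weight $B$) as neighbors, both different from $W_j$. The main obstacle is really the first step: a naive attempt to give each cycle a block of consecutive labels leaves $r-1$ gaps in $\{1,\dots,rn\}$, and taking common difference $r$, matching the number of cycles, is exactly what resolves those gaps and keeps the universal weights $A$ and $B$ independent of $j$.
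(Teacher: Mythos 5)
Your construction is correct: the labeling is a bijection onto $\{1,\dots,rn\}$, the weights are exactly as you compute, and the bound $W_r=\tfrac{r(n+3)}{2}<rn+1$ keeps the $r$ cycle-specific weights disjoint from $A$ and $B$, giving $r+2$ colors. This is essentially the paper's approach --- an explicit labeling that is increasing on even-indexed edges and decreasing on odd-indexed ones so that interior vertices take two universal weights and each $v_0^j$ a distinct weight --- differing only in bookkeeping: you spread labels across cycles via arithmetic progressions of common difference $r$, whereas the paper gives each cycle consecutive blocks with a case split on the parity of $j$.
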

\begin{proof}
We define a local antimagic labeling  $f:E\to \{1,2,\dots,nr\}$ as 
$$f(v_i^jv_{i+1}^j) =\begin{cases}
\frac{jn}{2}+ \frac{i}{2} +1&\mbox{if } i\equiv 0(\bmod\ 2), j\equiv 0(\bmod\ 2)\\

 (r- \frac{j}{2})n- \frac{i-1}{2} & \mbox{if } i\equiv 1(\bmod\ 2), j\equiv 0(\bmod\ 2)\\
 
 (r- \frac{(j-1)}{2})n- \frac{i}{2}- \frac{(n-1)}{2} &\mbox{if } i\equiv 0(\bmod\ 2),  j\equiv 1(\bmod\ 2)\\
 
 (\frac{j-1}{2})n+ \frac{i+1}{2} + \frac{n+1}{2} &\mbox{if } i\equiv 1(\bmod\ 2), j\equiv 1(\bmod\ 2).
 \end{cases}$$ 
 
Then the induced vertex weights are  as follows:

for $i\neq 0$
$$ w(v_i^j) =\begin{cases}
rn+2 &  \mbox{if } i\equiv j(\bmod\ 2) \\
rn+1 & \mbox{if } i\not\equiv j(\bmod\ 2).
\end{cases}$$

and for $i= 0$
$$ w(v_i^j) =\begin{cases}
nj+\frac{n+3}{2} & \mbox{if } j\equiv 0(\bmod\ 2).\\

2n(r-\frac{j-1}{2})-\frac{3}{2}(n-1) & \mbox{if } j\equiv 1(\bmod\ 2).
\end{cases}$$

Since we have $r+2$ distinct vertex weights, we conclude that $\chi_{la}(rC_n)\leq r+2$.
\end{proof}
From the Lemmas \ref{cycle1} and \ref{cycle2}, we have the following theorem.
\begin{theorem}
For $n \geq 3$ the local antimagic chromatic number $3 \le \chi_{la}(rC_n) \leq r+2$.
\end{theorem}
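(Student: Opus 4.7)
The theorem is simply a consolidation of Lemmas \ref{cycle1} and \ref{cycle2}, so my plan is to combine these two results with a brief bridging argument rather than to carry out any new construction.

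First, for the lower bound $\chi_{la}(rC_n) \ge 3$, I would invoke Observation \ref{obs: boundunion}: since each $C_n$ is a component of $rC_n$, we have $\chi_{la}(C_n) \le \chi_{la}(rC_n)$. Combined with the known result of Arumugam et al.\ \cite{lac_arumugam} that $\chi_{la}(C_n) = 3$ for every $n \ge 3$ (a fact already used implicitly in the proof of Lemma \ref{cycle1}), this delivers the required inequality. It is worth flagging here that this lower bound is independent of $r$, and in particular is strict even when the chromatic number $\chi(C_n) = 2$ (for even $n$); the gap between $\chi(C_n)$ and $\chi_{la}(C_n)$ is what makes the number $3$, rather than $2$, appear.

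For the upper bound $\chi_{la}(rC_n) \le r+2$, I would split on the parity of $n$. If $n$ is even, Lemma \ref{cycle1} yields the even stronger conclusion $\chi_{la}(rC_n) = 3$, and since $r \ge 1$ we have $3 \le r+2$, so the claimed bound holds. If $n$ is odd, Lemma \ref{cycle2} gives $\chi_{la}(rC_n) \le r+2$ directly. Combining the two cases yields the common upper bound $r+2$ claimed by the theorem.

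There is essentially no obstacle to overcome: the constructions inside the two lemmas do all the work, and the main task is simply to assemble the inequalities in the correct direction and observe that for even $n$ the lemma's sharper bound is dominated by $r+2$. The one mild subtlety is that the theorem phrases both cases under the same uniform upper bound $r+2$, which is tight only in the odd case and loose by $r-1$ in the even case; I would mention this explicitly so the reader is not misled into expecting sharpness for even~$n$.
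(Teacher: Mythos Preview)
Your proposal is correct and matches the paper's approach exactly: the paper simply states that the theorem follows from Lemmas~\ref{cycle1} and~\ref{cycle2} without further argument, and your bridging (lower bound via Observation~\ref{obs: boundunion} and $\chi_{la}(C_n)=3$, upper bound by parity split) is precisely the content implicit in that citation. Your added remark on the non-sharpness for even $n$ is a harmless elaboration.
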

Let $rK_{1,n}$ denotes $r$ copies of a star $K_{1,n}$. Let $u_1,u_2,\dots,u_r$ be the $r$ central vertices of $rK_{1,n}$. Let $v_{i,1},v_{i,2},\dots,v_{i,n}$ be $n$ pendant vertices adjacent to the central vertex $u_i$.  Note that $deg(u_i)= n$ and $|E(rK_{1,n})| = rn$.
\begin{lemma}{\label{star1}}
For $r \geq 1$ and for even $n$ the $\chi_{la}(rK_{1,n}) = rn+1$. 
\end{lemma}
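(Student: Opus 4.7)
The lower bound is immediate from Theorem \ref{th:lac_bound}: the graph $rK_{1,n}$ has exactly $rn$ pendant vertices (the leaves of the $r$ stars), so $\chi_{la}(rK_{1,n}) \ge rn+1$. The work is therefore all in the upper bound, and the plan is to exhibit a local antimagic labeling that realizes exactly $rn+1$ distinct weights: one common weight $W$ shared by all $r$ centers, and $rn$ pairwise distinct pendant weights, none equal to $W$.

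The key observation is that since a pendant's weight is just the label of its incident edge, any bijective labeling $f:E\to\{1,\dots,rn\}$ automatically produces $rn$ pairwise distinct pendant weights. So the only task is to distribute the labels $\{1,2,\dots,rn\}$ among the $r$ stars, $n$ labels per star, so that the $n$ labels going to the edges of each star sum to the same constant $W$. A counting argument forces $W=\frac{n(rn+1)}{2}$; since $n$ is even this is an integer, and a short estimate shows $W>rn$ for every even $n\ge 2$ and $r\ge 1$, so $W$ is automatically different from each pendant weight, which lies in $\{1,\dots,rn\}$.

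The remaining step is to produce the required partition of $\{1,\dots,rn\}$ into $r$ blocks of size $n$ with equal sum $W$. The clean way is to pair the labels into the $\tfrac{rn}{2}$ complementary pairs $\{k,\,rn+1-k\}$ for $k=1,\dots,\tfrac{rn}{2}$, each of sum $rn+1$, then group these pairs arbitrarily into $r$ bundles of $\tfrac{n}{2}$ pairs each. Each bundle contains $n$ distinct labels summing to $\frac{n}{2}(rn+1)=W$, and there are exactly $r$ bundles. Assign the $n$ labels of the $j$-th bundle to the $n$ edges of the $j$-th star $K_{1,n}$ in any order.

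Verifying local antimagicness is then routine: adjacent pairs in $rK_{1,n}$ are always (center, pendant) pairs, and a pendant weight is at most $rn$ while each center weight is $W>rn$, so adjacent weights are distinct. The induced coloring uses the single value $W$ on all $r$ centers and the $rn$ distinct values $\{1,\dots,rn\}$ on the pendants, giving $rn+1$ colors total. I do not expect any real obstacle here; the only point requiring a moment of care is confirming $W\notin\{1,\dots,rn\}$, which separates into the boundary case $n=2$ (where $W=2r+1$) and $n\ge 4$ (where the estimate $W\ge 2rn+2$ is comfortable).
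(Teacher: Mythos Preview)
Your proof is correct and follows essentially the same approach as the paper: both construct the labeling by assigning to each star $n/2$ complementary pairs $\{k,\,rn+1-k\}$ so that every center has weight $\frac{n}{2}(rn+1)$. The paper writes down a specific explicit formula realizing one such grouping of pairs, whereas you describe the grouping abstractly; your version is also a bit more complete in that you invoke Theorem~\ref{th:lac_bound} for the lower bound and explicitly check that the common center weight $W$ exceeds $rn$, points the paper leaves implicit.
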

\begin{proof}
Define an edge labeling $f:E\to \{1,2,\dots,rn\} $ as:
\begin{equation}\nonumber
f(u_iv_{i,j}) = 
\begin{cases}
(i-1)\frac{n}{2}+j&\mbox{if } 1\leq j\leq \frac{n}{2}\\
rn-(n-j)-(i-1)\frac{n}{2}& \mbox{if } \frac{n}{2}+1\leq j\leq n.
\end{cases}
\end{equation}
Therefore the vertex  weights are,
$w(v_{i,j})= f(u_iv_{i,j}))$  and $w(u_i)= \frac{n}{2}(mn+1)$. Since pendant vertices contribute $rn$ distinct colors and each $u_i, 1\leq i\leq n$ has the same weight, the total number of distinct weights is $rn+1$.
\end{proof}
\begin{lemma}{\label{star2}}
For odd $r \geq 1$ and for odd $n$,  $\chi_{la}(rK_{1,n}) = rn+1$. 
\end{lemma}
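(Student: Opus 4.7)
The plan is to match the lower bound coming from pendants with an upper-bound construction built from a magic rectangle. Since $rK_{1,n}$ has exactly $rn$ pendant vertices, Theorem~\ref{th:lac_bound} immediately gives $\chi_{la}(rK_{1,n}) \ge rn+1$, so everything reduces to exhibiting a local antimagic labeling $f\colon E\to\{1,2,\dots,rn\}$ whose induced coloring uses only $rn+1$ colors.

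Under any bijective edge labeling, each pendant $v_{i,j}$ has weight $f(u_iv_{i,j})$, so the $rn$ pendants receive exactly the $rn$ distinct colors $1,2,\dots,rn$. To cap the total number of colors at $rn+1$, I would force all $r$ central vertices to share a single weight $S$. Since the edge labels sum to $rn(rn+1)/2$, this common weight must equal $S=n(rn+1)/2$, which is an integer precisely because $rn$ is odd (here both $r$ and $n$ are odd). Thus the construction boils down to partitioning $\{1,2,\dots,rn\}$ into $r$ blocks of size $n$, each summing to $S$, and assigning the $i$-th block to the $n$ edges at $u_i$.

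Such a partition is exactly what a magic rectangle $MR(r,n)$ provides: its row sums are $\rho = n(rn+1)/2 = S$. When $r\ge 3$ and $n\ge 3$ are both odd we have $r,n>1$, $rn>4$, and $r\equiv n\pmod 2$, so Theorem~\ref{mr} guarantees the existence of $MR(r,n)$. I would then define $f$ by labeling the edges $u_iv_{i,1},\dots,u_iv_{i,n}$ with the entries of the $i$-th row, in any order. The remaining case $r=1$ is immediate: for $K_{1,n}$ the standard bijective labeling gives pendant weights $1,\dots,n$ and central weight $n(n+1)/2$, producing $n+1=rn+1$ distinct weights.

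The final step is to check that the construction is genuinely local antimagic with the claimed color count. Each edge joins a pendant to a center, so I only need $S\neq k$ for every $k\in\{1,\dots,rn\}$; since $S=n(rn+1)/2 \ge 3(rn+1)/2 > rn$ for $n\ge 3$, the central color is strictly larger than every pendant color, and adjacent vertices receive different colors. Counting: $rn$ pendant colors plus one shared central color gives exactly $rn+1$, matching the lower bound. There is no serious obstacle here; the only delicate point is ensuring that $MR(r,n)$ is available in the required parity, which is handled by quoting Theorem~\ref{mr}, and that the common row sum does not coincide with a pendant color, which is the brief inequality above.
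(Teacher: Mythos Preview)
Your proposal is correct and follows essentially the same approach as the paper: both proofs build the labeling from a magic rectangle so that all $r$ centers receive the common weight $n(rn+1)/2$, leaving the $rn$ pendants with the $rn$ distinct edge labels. Your version is in fact a bit more careful than the paper's, since you explicitly invoke Theorem~\ref{th:lac_bound} for the lower bound, separately treat the case $r=1$ (where Theorem~\ref{mr} does not supply a magic rectangle), and verify that the common central weight exceeds $rn$ and hence cannot coincide with any pendant color.
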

\begin{proof}  
Since both $r$ and $n$ are odd, then by Theorem \ref{mr} there exists a magic rectangle $MR(n,r)$ of order $n\times r$. Let $C_1,C_2,\ldots,C_r$ be the $r$ columns of the magic rectangle $MR(n,r)$. Now we define a bijection $f:E\rightarrow \{1,2,\ldots,nr\} $ as $f(u_iv{i,j}) = C_{ij}$, where $C_{ij}$ is the $j$th entry of the $i^{th}$  column $C_i$. The vertex weights are, $w(u_i)= \frac{n(nr+1)}{2}$, $1\leq i\leq r$ and $w(v_{i,j}) = f(u_iv_{i,j})$. Since pendant vertices contribute $rn$ distinct colors and each $u_i, 1\leq i\leq n$ has the same weight,  therefore the total number of distinct weights are $nr+1$, hence $\chi_{la}(rK_{1,n}) = nr+1$. 
\end{proof}

\begin{lemma}{\label{star3}}
 For $r \geq 1$, $\chi_{la}(rK_{1,n}) = nr+2$, If
$r\equiv 0(\bmod ~2)$ and $n\equiv 1(\bmod ~2)$.
\end{lemma}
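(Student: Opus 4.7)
The plan is to prove the matching bounds $\chi_{la}(rK_{1,n})\ge rn+2$ and $\chi_{la}(rK_{1,n})\le rn+2$. For the lower bound, Theorem \ref{th:lac_bound} and the $rn$ pendants give $\chi_{la}(rK_{1,n})\ge rn+1$ for free. Moreover, since every pendant is incident to a single edge, the set of pendant weights is exactly the set of edge labels $\{1,2,\dots,rn\}$. I would then assume for contradiction that some local antimagic labeling achieves only $rn+1$ colors; this would force every center weight $w(u_i)$ to lie in $\{1,\dots,rn\}\cup\{W^*\}$ for a single extra value $W^*$. Using $\sum_{i=1}^{r} w(u_i)=\tfrac{rn(rn+1)}{2}$, the ``all equal'' scenario $w(u_1)=\cdots=w(u_r)=W^*$ forces $W^*=\tfrac{n(rn+1)}{2}$; but $r$ even makes $rn$ even, so $rn+1$ is odd, and with $n$ also odd this $W^*$ is a half-integer, a contradiction. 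A parity/counting refinement on the individual $w(u_i)$, using that each $w(u_i)$ is a sum of $n$ distinct labels, then shows at least two center weights must fall outside $\{1,\dots,rn\}$, giving $\chi_{la}(rK_{1,n})\ge rn+2$.

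For the upper bound I would construct an explicit labeling using the magic rectangle machinery of Section 2. Since $n$ and $r-1$ are both odd with $n(r-1)>4$ (whenever $r\ge 4$), Theorem \ref{mr} supplies a magic rectangle $MR(n,r-1)$ on entries $\{1,\dots,n(r-1)\}$ with common column sum $\sigma=\tfrac{n(n(r-1)+1)}{2}$. I would assign the $j$-th column of this rectangle to the $n$ edges at $u_j$ for $j=1,\dots,r-1$, so each of these $r-1$ centers receives weight exactly $\sigma$. The remaining $n$ labels $\{n(r-1)+1,\dots,nr\}$ are distributed, in any order, among the edges of the last star, giving its center the single weight $W=\sigma+\tfrac{n^2 r}{2}$. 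The pendant weights then exhaust $\{1,\dots,nr\}$, while a short estimate shows that both $\sigma$ and $W$ strictly exceed $nr$, so these contribute exactly two new colors, producing $nr+2$ distinct weights overall. The boundary case $r=2$, where $MR(n,1)$ is not available from Theorem \ref{mr}, would be handled by an explicit partition of $\{1,\dots,2n\}$ into two $n$-subsets with sums $X=\tfrac{2n^2+n-1}{2}$ and $X+1$, straddling the non-integral average $\tfrac{n(2n+1)}{2}$.

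The hardest step will be the lower bound. Ruling out the ``all centers equal'' configuration reduces to the one-line parity check above, but excluding the subtler configuration in which up to $r-1$ of the center weights happen to coincide with pendant weights in $\{1,\dots,rn\}$, leaving only one genuinely new color, requires a finer analysis. One must combine the constraint $\sum_i w(u_i)=\tfrac{rn(rn+1)}{2}$ with a parity tally of the individual $w(u_i)$ (each being a sum of $n$ labels, with $n$ odd) to show that the $r$ center weights cannot collectively sit, up to one exception, inside the pendant range, so that at least two new colors are always forced.
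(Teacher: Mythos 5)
Your upper bound argument is essentially the paper's own proof: label the first $r-1$ stars with the columns of a magic rectangle $MR(n,r-1)$ (which exists since $n$ and $r-1$ are both odd) and give the last star the leftover block $\{n(r-1)+1,\dots,nr\}$. You even repair an omission in the paper by treating $r=2$ separately, since $MR(n,1)$ is not provided by Theorem~\ref{mr}, which requires both dimensions to exceed $1$. So $\chi_{la}(rK_{1,n})\le rn+2$ is in good shape.

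The lower bound is where the proposal fails, and the step you defer to a ``finer analysis'' cannot be completed, because the inequality $\chi_{la}(rK_{1,n})\ge rn+2$ is false precisely in the configuration you are trying to exclude. Take $r=2$, $n=3$: label the edges of the first star $1,2,3$ and those of the second $4,5,6$. The pendant weights are $1,\dots,6$, the centers receive $6$ and $15$, adjacent vertices get distinct weights, and the total number of colors is $7=rn+1$, not $8$. The point is that $w(u_1)=6$ is permitted to coincide with the weight of a pendant in the \emph{other} star, since those two vertices are not adjacent; your parity computation only rules out the case where all $r$ centers share a single weight, and no parity tally of the individual $w(u_i)$ can rule out a configuration that actually occurs. (The bound $rn+2$ does hold when $n\ge 2r$: then every center weight is at least $n(n+1)/2>rn$, so every center weight is a new color, and the parity argument forces at least two distinct ones. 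But the lemma imposes no such restriction on $n$ versus $r$.) For comparison, the paper's own proof never addresses the lower bound at all --- it exhibits an $(rn+2)$-coloring and simply asserts equality --- so the statement itself needs either an added hypothesis such as $n\ge 2r$ or a corrected value for small $n$; as written, neither your argument nor the paper's establishes it.
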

\begin{proof}
Since $n\equiv 1(\bmod ~2)$ and  $r \equiv 0(\bmod ~2)$, then $n\equiv r-1
\equiv 1(\bmod ~2)$, therefore by Theorem \ref{mr}, there exists $ n\times r-1$  magic rectangle $ MR(n, r-1)$. Let $ C_1, C_2,\ldots, C_{r-1}$ be the columns of the $ MR(n, r-1)$. We label the edges in the first $r-1$ copies of $ K_{1,n}$ using respective columns $ C_1, C_2,\ldots, C_{r-1}$. Label the edges in the $r^{th}$ copy of $ K_{1,n}$ using the remaining set of labels $\{n(r-1)+1,n(r-1)+2, \ldots,nr\}$. Now $ \sum_{i=1}^n n(r-1)+i = n^2(r-1)+\frac{n(n+1)}{2} $. The pendant vertices of $G$ induce $nr$ distinct weights. For the support vertices $\{u_1,u_2,\dots, u_r\}$, the weights are as follows:
$$w(u_i) = 
\begin{cases}
\frac{n(n(r-1))+1}{2}& \mbox{if } i = 1,2,\ldots, r-1\\
n^2(r-1) +\frac{n(n+1)}{2}& \mbox{if } i = r
\end{cases}$$

The total number of distinct weights under this labeling is $nr+2$, Hence we conclude that $\chi_{la}(rK_{1,n}) = rn+2$.
\end{proof}

The illustration of the local antimagic coloring of $3K_{1,3}$ is shown in Figure \ref{3K_{1,3}}.
\begin{figure}[ht]
    \centering
    \includegraphics[scale= .70]{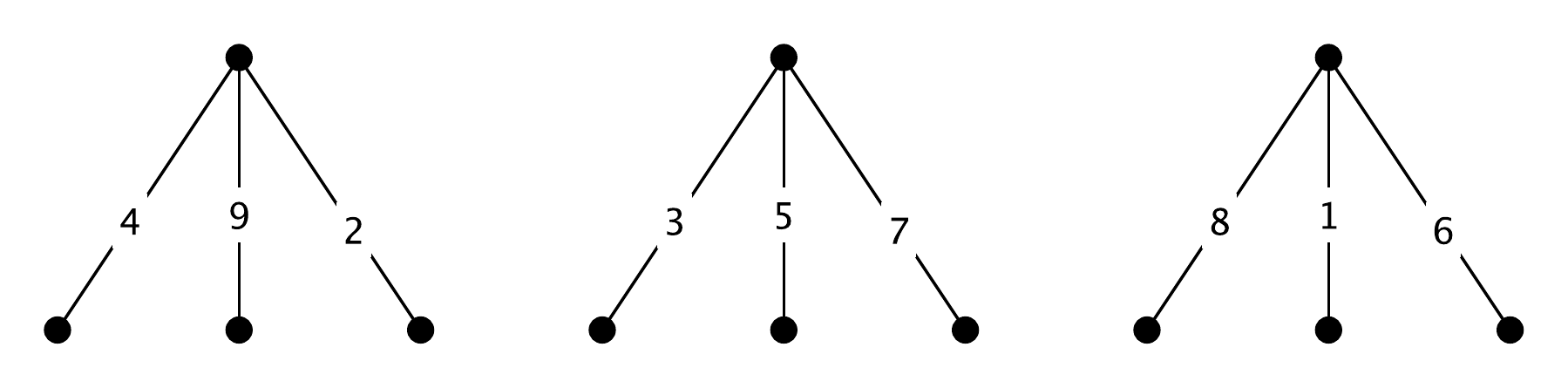}
    \caption{A local antimagic labeling of $3K_{1,3}$ with $\chi_{la}(3K_{1,3}) = 3\times 3+1$.}
    \label{3K_{1,3}}
\end{figure}

The following theorem is evident from the above Lemmas \ref{star1}, \ref{star2} and \ref{star3}.
\begin{theorem}
$rn+1 \le \chi_{la} (r K_{1,n}) \leq rn+2.$
\end{theorem}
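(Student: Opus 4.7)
The plan is to derive both bounds directly from results already in hand, so the proof will essentially be a consolidation rather than any new construction. For the lower bound, I first count pendants: each copy of $K_{1,n}$ contributes $n$ leaves, so $rK_{1,n}$ has exactly $rn$ pendant vertices. Applying Theorem \ref{th:lac_bound} immediately yields $\chi_{la}(rK_{1,n}) \ge rn + 1$, and this works uniformly regardless of parity considerations.

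For the upper bound, I would invoke the three preceding lemmas, which together cover every case. Lemma \ref{star1} handles arbitrary $r$ when $n$ is even and gives $\chi_{la}(rK_{1,n}) = rn+1 \le rn+2$. Lemma \ref{star2} handles the case of odd $r$ and odd $n$, again with value $rn+1$. Lemma \ref{star3} handles the remaining case of even $r$ and odd $n$, giving exactly $rn+2$. Since the parities of $r$ and $n$ admit only the combinations (any, even), (odd, odd), and (even, odd), these three lemmas are exhaustive, and in each case the labeling constructed produces at most $rn+2$ distinct vertex weights. Hence $\chi_{la}(rK_{1,n}) \le rn + 2$ holds universally.

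There is no real obstacle here: the theorem is the natural summary statement aggregating Lemmas \ref{star1}, \ref{star2}, and \ref{star3} with the pendant lower bound of Theorem \ref{th:lac_bound}. The only point worth being explicit about in the write-up is that the three lemmas exhaust the parity cases, so no further construction is required.
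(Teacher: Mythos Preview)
Your proposal is correct and matches the paper's approach: the paper simply states that the theorem is evident from Lemmas~\ref{star1}, \ref{star2}, and \ref{star3}, which together cover all parity cases and already encode the lower bound (their equality claims rest on the pendant bound of Theorem~\ref{th:lac_bound} that you invoke explicitly).
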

The chromatic number of a complete graph on $n$ vertices is $n$. It is easy to observe and prove that $\chi_{la}(K_n) = n = \chi(K_n)$. Moreover, with some conditions on $n$, we have proved that $\chi_{la}(mK_n) = n$.
\begin{proposition} \label{prop:1}
For $n \ge 3, \chi_{la}(K_n) = n$.
\end{proposition}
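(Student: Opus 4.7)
The plan is to prove the matching bounds $n \leq \chi_{la}(K_n) \leq n$. The lower bound is immediate: any local antimagic labeling induces a proper vertex coloring of $K_n$, so $\chi_{la}(K_n) \geq \chi(K_n) = n$.

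For the upper bound, the key observation is that every two vertices of $K_n$ are adjacent, so a local antimagic labeling of $K_n$ produces exactly $n$ distinct weights if and only if it is an antimagic labeling in the classical Hartsfield--Ringel sense. Hence it suffices to exhibit such a labeling using the edge labels $\{1, 2, \ldots, \binom{n}{2}\}$. I would proceed by induction on $n$. The base case $n = 3$ is handled by assigning $1, 2, 3$ to the three edges of $K_3$, producing the distinct vertex weights $3, 4, 5$. For the inductive step, suppose $K_{n-1}$ admits an antimagic labeling using $\{1, \ldots, \binom{n-1}{2}\}$, and after reindexing vertices let the induced weights be $w_1 < w_2 < \cdots < w_{n-1}$. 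Adjoin a new vertex $v_n$ and assign the label $\binom{n-1}{2} + i$ to the edge $v_nv_i$ for $1 \leq i \leq n-1$. The updated weight at $v_i$ becomes $W_i = w_i + \binom{n-1}{2} + i$, so $W_{i+1} - W_i = (w_{i+1} - w_i) + 1 \geq 2$, giving $W_1 < W_2 < \cdots < W_{n-1}$.

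The main remaining obstacle is verifying that $w(v_n)$ differs from each $W_i$. Here $w(v_n)$ is the sum of the $n-1$ largest labels $\binom{n-1}{2}+1, \ldots, \binom{n}{2}$, while each $W_i$ uses exactly one of these large labels together with $n-2$ labels drawn from $\{1, \ldots, \binom{n-1}{2}\}$. Bounding $w_{n-1}$ from above by the sum of the $n-2$ largest labels in $\{1, \ldots, \binom{n-1}{2}\}$ yields $w(v_n) - W_{n-1} \geq \binom{n-1}{2} + \binom{n-2}{2} > 0$ for $n \geq 3$; hence $w(v_n)$ exceeds every $W_i$, all $n$ weights are distinct, and the induction closes. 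Combined with the lower bound, this gives $\chi_{la}(K_n) = n$.
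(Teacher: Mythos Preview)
Your proof is correct. The lower bound is immediate, and your inductive construction is sound: the bound $w(v_n)-W_{n-1}\ge \binom{n-1}{2}+\binom{n-2}{2}$ follows exactly as you say, since $w_{n-1}$ is a sum of $n-2$ labels from $\{1,\dots,\binom{n-1}{2}\}$ and hence at most $(n-2)\binom{n-1}{2}-\binom{n-2}{2}$, while $w(v_n)=(n-1)\binom{n-1}{2}+\binom{n}{2}$ and $\binom{n}{2}-(n-1)=\binom{n-1}{2}$.

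The paper takes a different route: it writes down an explicit closed-form edge labeling of $K_n$ all at once and asserts that the resulting vertex weights $w(v_1),\dots,w(v_n)$ are strictly increasing. Your argument is inductive rather than explicit, building the labeling one vertex at a time and reserving the $n-1$ largest labels for the edges to the new vertex. The two approaches ultimately produce labelings of the same flavor, but yours makes the verification more transparent: the monotonicity of the $W_i$ and the inequality $w(v_n)>W_{n-1}$ are checked directly rather than being left to the reader. The paper's approach, by contrast, is shorter once one trusts the formula, and avoids the need to reindex vertices at each step.
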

\begin{proof}
Let $V(K_n) = \{ v_1, v_2, \dots, v_n\}$ and for each $1 \le j \le n$ and $j < i \le n$ let $e_{i-1} = v_jv_i$. Define edge labeling $f$ by $f(e_i) = (j-1)n - \frac{j(j-1)}{2}+1$. It is easy to observe that the weights, $w(v_1), \dots, w(v_n)$ are in increasing order. Hence, $\chi_{la}(K_n) \le n = \chi(K_n)$. This proves the proposition.
\end{proof}
\begin{proposition}
For $n \ge 3, n \equiv 1 \mbox{ or } 3(\bmod\ 4), \chi_{la}(mK_{n}) = n$.
\end{proposition}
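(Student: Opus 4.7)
The plan is to match a lower and an upper bound of $n$. The lower bound is immediate: each component of $mK_{n}$ is a copy of $K_{n}$, which has chromatic number $n$, so any proper vertex coloring (in particular one induced by a local antimagic labeling) of $mK_{n}$ must use at least $n$ distinct colors; hence $\chi_{la}(mK_{n}) \geq \chi(K_{n}) = n$.

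For the upper bound I would split on the residue of $n$ modulo $4$. If $n \equiv 1 \pmod 4$, then $n-1 = 4r$ for some $r \geq 1$, and $K_{n}$ is a $4r$-regular graph. Proposition \ref{prop:1} gives $\chi_{la}(K_{n}) = n$, so Theorem \ref{baca1} applies verbatim to yield $\chi_{la}(mK_{n}) \leq \chi_{la}(K_{n}) = n$ for every positive integer $m$. This case is essentially a one-line consequence of the tools already assembled.

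The interesting case is $n \equiv 3 \pmod 4$, where $n - 1 = 4r + 2$ makes $K_{n}$ a $(4r+2)$-regular graph. Theorem \ref{baca2} \emph{cannot} be invoked as a black box here, because any $2$-factor of $K_{n}$ spans $n$ vertices, and an odd number of vertices cannot be covered by even cycles alone. So in this case I would argue by an explicit MRS construction. Since $n$ and $(n-1)/2$ are both odd when $n \equiv 3 \pmod 4$, Theorem \ref{mrs2.3} supplies a magic rectangle set $MRS(n,(n-1)/2;m)$ (with the parity conditions on $m$ required by that theorem). Use the near-$1$-factorization of $K_{n}$ into the $n$ near-$1$-factors $F_{0},\dots,F_{n-1}$, where $F_{k}$ consists of the $(n-1)/2$ edges $v_{k+\ell}v_{k-\ell}$, $\ell = 1,\dots,(n-1)/2$, and misses vertex $v_{k}$. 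Assign the $t$-th rectangle of the MRS to the $t$-th copy of $K_{n}$: its $(k,\ell)$-entry becomes the label of the edge $v_{k+\ell}v_{k-\ell}$ in $F_{k}$ of that copy.

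The hard part is engineering this correspondence so that (i) within each copy the $n$ vertex weights are pairwise distinct (needed for a proper coloring on $K_{n}$), and (ii) the set of $n$ weights is identical across the $m$ copies (so only $n$ colors appear overall). A vertex $v_{j}$ in a given copy receives its weight as a sum of $n-1$ entries of the corresponding rectangle, namely one from each row $k \neq j$, selected by the column $\ell = \min(|j-k|,\,n-|j-k|)$; this selection pattern is \emph{not} a single row or column sum, so the constancy of $\rho$ and $\sigma$ across rectangles has to be combined with the cyclic symmetry of the near-$1$-factorization to force the weight multiset to agree between copies. Verifying that the induced weights are distinct within a copy — the one place where the symmetry could collapse — is where I expect the main technical work, and it is also where any parity restrictions on $m$ that the theorem tacitly requires will surface. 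Once this is settled, we conclude $\chi_{la}(mK_{n}) \leq n$, matching the lower bound.
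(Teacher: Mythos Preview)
Your treatment of $n \equiv 1 \pmod 4$ matches the paper exactly: Proposition~\ref{prop:1} plus Theorem~\ref{baca1}.

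For $n \equiv 3 \pmod 4$ the paper does precisely what you ruled out: it invokes Theorem~\ref{baca2} directly, asserting that $K_n$ ``contains $(n-1)$ even spanning cycles'' and hence satisfies the hypothesis. Your objection is sound---an odd vertex set cannot be covered by vertex-disjoint even cycles, so $K_n$ with $n$ odd has no $2$-factor consisting only of even cycles---so the paper's one-line appeal to Theorem~\ref{baca2} is, at best, underjustified as written.

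Your proposed substitute via $MRS\bigl(n,(n-1)/2;m\bigr)$ does not close the gap either. The magic-rectangle-set axioms control only row sums and column sums across the $m$ rectangles. The weight of $v_j$ in copy $t$ is a sum over the transversal $\{(k,\ell): k \ne j,\ \ell = \min(|j-k|,\,n-|j-k|)\}$ of rectangle $t$, which is neither a single row nor a single column; nothing in the MRS definition forces such transversal sums to agree between rectangles, so your condition~(ii) is not guaranteed, and nothing forces the $n$ transversal sums within one rectangle to be pairwise distinct, so~(i) is likewise unsupported. You have located the difficulty accurately, but the MRS machinery as invoked gives no leverage on it. Finally, as you already anticipate, Theorem~\ref{mrs2.3} supplies this MRS only when $m$ is odd, so even a repaired version of your argument would leave the even-$m$ cases open.
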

\begin{proof}
Let $n \ge 3$. If $n \equiv 1 \pmod{4}$ then $K_n$ is $4r$-regular for some $r \ge 1$ and proof follows by Theorem~\ref{baca1}. If $n \equiv 3\pmod{4}$ then $K_n$ is $(4r+2)$-regular for some $r \ge 1$ and it contains $(n-1)$ even spanning cycles. Hence proof follows by Theorem~\ref{baca2}.\\
\end{proof}

\begin{theorem}
Let $\chi_{la}(rK_{m,n}) = 2$ if positive integers $m,\ n,\ r$ with $m \ne n$ satisfies one of the following conditions:
\begin{enumerate}
    \item $1 < m \le n$ and $m$ and $n$ are both even, $r \ge 1$, and $(m,n) \ne (2,2)$.
    \item $1 < m \le n$ and $m, n, r$ are all odd.
\end{enumerate}
\end{theorem}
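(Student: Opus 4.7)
The plan is to recognize that the two hypotheses on $(m,n,r)$ are precisely the cases in which Theorem~\ref{mrs2.3} guarantees the existence of a magic rectangle set $MRS(m,n;r)$, and then use such a set to label the $r$ copies of $K_{m,n}$ simultaneously. The lower bound $\chi_{la}(rK_{m,n})\ge 2$ is immediate, since $rK_{m,n}$ contains at least one edge and therefore any proper coloring induced by a local antimagic labeling must use at least two colors.

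For the upper bound, I would set up notation: let the $k$-th copy of $K_{m,n}$ (for $1\le k\le r$) have bipartition $U_k=\{u_1^k,\dots,u_m^k\}$ and $V_k=\{v_1^k,\dots,v_n^k\}$, so that the edge set is $\{u_i^k v_j^k : 1\le i\le m,\ 1\le j\le n,\ 1\le k\le r\}$ of total size $rmn=|E(rK_{m,n})|$. Under either hypothesis (1) or (2), Theorem~\ref{mrs2.3} produces an $MRS(m,n;r)$, i.e.\ rectangles $A^{(1)},\dots,A^{(r)}$ of size $m\times n$ whose entries together form a permutation of $\{1,2,\dots,rmn\}$, with every row sum equal to $\rho=\frac{n(rmn+1)}{2}$ and every column sum equal to $\sigma=\frac{m(rmn+1)}{2}$.

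The natural labeling to try is $f(u_i^k v_j^k)=A^{(k)}_{ij}$. This is a bijection from $E(rK_{m,n})$ onto $\{1,\dots,rmn\}$ by the defining property of the MRS. The vertex weights then collapse to two values: for every $u_i^k$, the weight $w(u_i^k)$ is the $i$-th row sum of $A^{(k)}$, hence equals $\rho$; and for every $v_j^k$, the weight $w(v_j^k)$ is the $j$-th column sum, hence equals $\sigma$. Because $m\ne n$, one checks that $\rho\ne\sigma$, so exactly two distinct weights appear. Adjacent vertices in any copy lie on opposite sides of the bipartition and therefore receive different weights, so $f$ is a valid local antimagic labeling and induces precisely $2$ colors, giving $\chi_{la}(rK_{m,n})\le 2$.

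There is essentially no serious obstacle here: the whole argument reduces to matching the existence conditions in the hypothesis with those in Theorem~\ref{mrs2.3} and then reading off the row/column sums. The only small point worth verifying explicitly is that $m\ne n$ indeed forces $\rho\ne\sigma$, which follows immediately from $\rho-\sigma=\frac{(n-m)(rmn+1)}{2}\ne 0$, and that the $(2,2)$ exclusion in (1) and the oddness in (2) are exactly what is needed to invoke the MRS existence result. Combining with the lower bound yields equality $\chi_{la}(rK_{m,n})=2$.
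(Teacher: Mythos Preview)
Your proof is correct and follows essentially the same approach as the paper: both invoke Theorem~\ref{mrs2.3} to obtain an $MRS(m,n;r)$, label the edges of the $k$-th copy of $K_{m,n}$ by the entries of the $k$-th rectangle, and read off the two constant row/column sums as the only vertex weights. Your write-up is in fact slightly more careful, since you explicitly verify $\rho\ne\sigma$ via $m\ne n$, whereas the paper leaves this implicit.
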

\begin{proof}
By Theorem \ref{mrs2.3} on the existence of magic rectangle sets, we have the existence of MRS$(m,n;r)$ for each of the above cases. Suppose there is a magic rectangle set $MRS(m,n;r)$. Let $M_1,M_2,\dots, M_r$ denotes the $r$ magic rectangles in $MRS(m,n;r)$. For $1\leq k\leq r$, define the vertex set of $k^{th}$  copy of  $K_{m,n}$ as
$V =\{v_i^k, w_j^k : 1\leq i\leq n, 1\leq j\leq m \}$, where $\{v_i^k : 1 \le i \le m\}$ and $\{w_i^k : 1 \le i \le n\}$ form the respective partite sets of $k^{th}$ copy of $K_{m,n}$.\\

Now for each $k\ (1\leq k\leq r)$ and each $i\ (1 \le i \le n) $ we label the edge set $\{ v_i^k w_j^k : 1\leq j\leq m \}$ with the numbers in the $i$th row of $M_k$.
Since the sum of elements in any row or column in the magic rectangle set is equal, the resulting labelling is a local antimagic that induces $2$ different colors. Therefore, $\chi_{la}(rK_{m,n}) \le 2$. Also, $\chi(rK_{m,n}) = 2$. We conclude that $\chi_{la}(rK_{m,n}) = 2$ whenever there exists a MRS$(m,n;r)$.
\end{proof}
Figure \ref{fig:3k_2,4} illustrates the local antimagic labeling of $3K_{2,4}$.
\begin{figure}[ht]
    \centering
    \includegraphics[scale=.4]{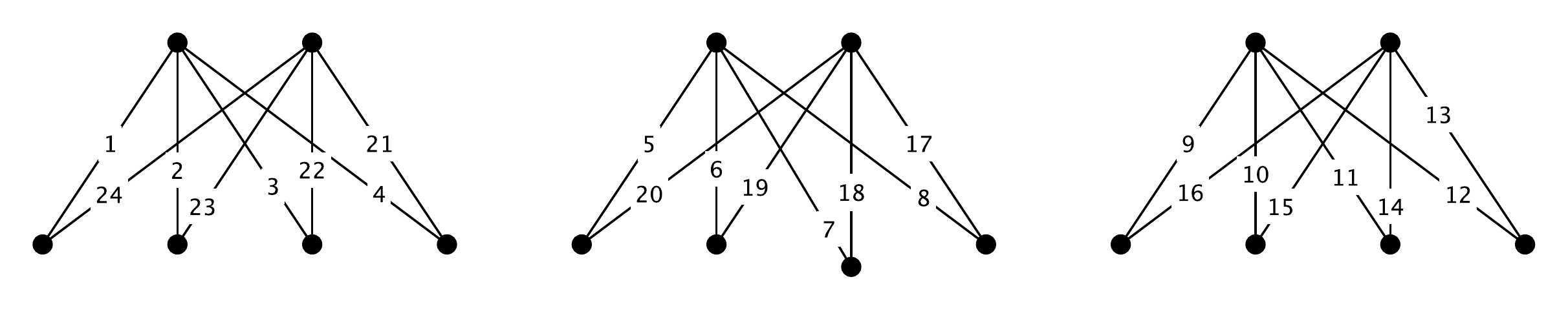}
    \caption{A local antimagic labeling of $3K_{2,4}$ with $\chi_{la}(3K_{2,4}) = 2$.}
    \label{fig:3k_2,4}
\end{figure}
\newline
An interesting class of graphs called a {\it necklace graph} has common vertices. A $u,v$-\textit{necklace} is a list of cycles $C_1, C_2, \dots, C_t$ such that 
$u \in C_1$, $v \in C_t$, consecutive cycles share exactly one vertex, and non-consecutive cycles are disjoint (see Figure \ref{fig:uv-necklace}). The number of edges in all the cycles is known as the \textit{length of the necklace}. We provide an upper bound for the local antimagic chromatic number for this class of graph.

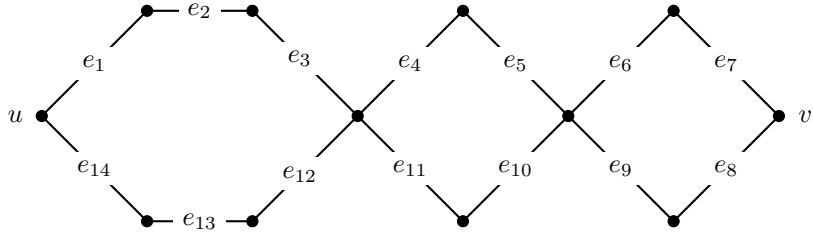
\begin{figure}[ht]
    \centering
\begin{tikzpicture}[scale=0.7]
\draw[fill=black] (0,0) circle (3pt);
\draw[fill=black] (2,2) circle (3pt);
\draw[fill=black] (4,0) circle (3pt);
\draw[fill=black] (2,-2) circle (3pt);
\draw[fill=black] (-2,-2) circle (3pt);
\draw[fill=black] (-4,-2) circle (3pt);
\draw[fill=black] (-4,2) circle (3pt);
\draw[fill=black] (-2,2) circle (3pt);
\draw[fill=black] (-6,0) circle (3pt);
\draw[fill=black] (6,2) circle (3pt);
\draw[fill=black] (8,0) circle (3pt);
\draw[fill=black] (6,-2) circle (3pt);

\draw [thick] (0,0) -- (2,2) -- (4,0) -- (2,-2) --(0,0) -- (-2,-2) --(-4,-2)-- (-6,0) -- (-4,2) -- (-2,2) --(0,0);
\draw[thick] (4,0) -- (6,2) -- (8,0) -- (6,-2) -- (4,0);
\node[fill=white] at (-5, 1) {$e_1$};
\node[fill = white] at (-3, 2) {$e_2$};
\node[fill = white] at (-1.1,1.1) {$e_3$};
\node[fill = white] at (1,1) {$e_4$};
\node[fill = white] at (3,1) {$e_5$};
\node[fill = white] at (5,1) {$e_6$};
\node[fill = white] at (7,1) {$e_7$};
\node[fill = white] at (7,-1) {$e_8$};
\node[fill = white] at (5,-1) {$e_9$};
\node[fill = white] at (3,-1) {$e_{10}$};
\node[fill = white] at (1,-1) {$e_{11}$};
\node[fill = white] at (-1.1, -1.1) {$e_{12}$};
\node[fill = white] at (-3,-2) {$e_{13}$};
\node[fill = white] at (-5,-1) {$e_{14}$};
\node at (-6.5,0) {$u$};
\node at (8.5,0) {$v$};
\end{tikzpicture}
    \caption{A $u,v$-necklace graph.}
    \label{fig:uv-necklace}
\end{figure}

\begin{theorem}
Let $G$ be an $u,v$-necklace on $t \ge 2$ cycles such that $G$ has no adjacent vertices of degree $4$. Then $\chi_{la}(G) \le 6$.

\end{theorem}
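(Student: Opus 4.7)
The plan is to exploit the fact that every vertex of the necklace $G$ has even degree ($2$ for $u$, $v$, and every interior cycle vertex; $4$ for each of the $t-1$ junctions), so $G$ admits an Eulerian circuit. I would fix such a circuit, starting and ending at $u$, and enumerate its edges in traversal order as $e_1, e_2, \ldots, e_m$, where $m = |E(G)|$. I would then label the edges by the zig-zag rule $f(e_{2k-1}) = k$ and $f(e_{2k}) = m+1-k$; a one-line computation shows that the consecutive-edge pair sum $f(e_k) + f(e_{k+1})$ equals $m+1$ when $k$ is odd and $m+2$ when $k$ is even.

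Next I would classify the vertex weights according to how the Eulerian circuit visits each vertex. A degree-$2$ vertex other than $u$ is visited exactly once, between two consecutive edges, so its weight is a single consecutive pair sum and lies in $\{m+1, m+2\}$. Each junction is visited exactly twice, so its weight is a sum of two such pair sums and hence lies in $\{2m+2, 2m+3, 2m+4\}$. The vertex $u$ is the start and end of the circuit, so its two incident edges are $e_1$ and $e_m$, producing the single exceptional weight $w(u) = 1 + f(e_m)$, which equals $m/2 + 2$ or $(m+3)/2$ according to the parity of $m$. Altogether the labeling produces at most $2 + 3 + 1 = 6$ distinct weights.

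Finally I would check that the labeling is a proper coloring. Two adjacent interior degree-$2$ vertices occupy consecutive positions in the circuit, so their pair sums have opposite parity and differ. A degree-$2$ vertex adjacent to a junction has weight at most $m+2$, while a junction has weight at least $2m+2$, so the two differ. Two junctions are never adjacent by hypothesis, which is precisely where the theorem's assumption is needed. Lastly, one verifies $w(u) < m+1$ for $m \ge 3$, which holds since $G$ has at least two cycles of length at least three, so $w(u)$ is distinct from the weights of $u$'s neighbors (degree-$2$ vertices or, if $|C_1|=3$, a junction).

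The main obstacle I anticipate is not any single hard step but careful bookkeeping: the single exceptional weight at $u$ must be shown distinct from each of the five other weights for every admissible $m$, which reduces to the estimates $w(u) < m+1 < 2m+2$ together with a brief case split on the parity of $m$. The structural hypothesis \emph{no adjacent vertices of degree $4$} is invoked at exactly one point, namely to avoid having to separate two junction weights that might both happen to equal $2m+3$.
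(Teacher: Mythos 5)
Your proposal is correct and follows essentially the same route as the paper: an Eulerian circuit starting and ending at $u$, the zig-zag labeling $f(e_{2k-1})=k$, $f(e_{2k})=m+1-k$, and the resulting classification of weights into $\{m+1,m+2\}$ for interior degree-$2$ vertices, $\{2m+2,2m+3,2m+4\}$ for junctions, and one exceptional value at $u$. Your additional bookkeeping on the properness of the coloring (in particular checking $w(u)<m+1$ and locating where the no-adjacent-junctions hypothesis is used) is more explicit than the paper's, but the argument is the same.
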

\begin{proof}
Let $G$ be an $u,v$-necklace of length $n$, where $C_i$ be a cycle of length $n_i$ for $1 \le i \le t$. By the definition of $G$, we have an Eulerian tour traversed clockwise starting and ending at $u$. We enumerate the edges of $G$ as we follow the Eulerian tour as shown in Figure \ref{fig:uv-necklace}. Now we label the edges as 
 \begin{equation*}
    f(e_i) =\begin{cases}
            \frac{i+1}{2} & \text{if $i$ is odd} \\
            n - (\frac{i}{2} - 1) & \text{if $i$ is even}.
            \end{cases}
\end{equation*}
Then for each vertex $x$ of degree $2$ other than $u$, it is easy to see that  $w(x) = n+1 \mbox{ or } n+2$ and
\begin{align*}
w(u) = 
\begin{cases}
    \frac{n+3}{2} \text{ if } n \text{ is odd}\\
    \frac{n+4}{2} \text{ if } n \text{ is even}
\end{cases}
\end{align*}
For a vertex $y$ of degree $4$, $w(y) = 2n+2 \text{ or } 2n+4 \text{ or } 2n+3$. This proves that $f$ induces $6$ colors as required.
\end{proof}
Let $G$ be a necklace graph with $q$ edges and lengths of all the cycles be even. Then $G$ is bipartite. In \cite{lac_cycle_lau} authors proved that for a bipartite graph $G$ with $q$ edges and two color classes $x$ and $y$, if number of vertices of colors $x$ and $y$ are $|X|$ and $|Y|$ respectively then $x|X| = y|Y| = \frac{q(q+1)}{2}$. Using this result one can obtain the examples when $\chi_{la}(G) > 2$. We have given one such example in Figure \ref{fig:necklace3c4}. Here, $q = 12$, $|X| = 6$, $|Y| = 4$ and $4 \nmid (\frac{q(q+1)}{2} = 6 \times 13)$. Hence, its local antimagic chromatic number is greater than $2$. We have the labeling which induces $3$ colors. Hence, its local antimagic chromatic number is $3$. Notice that if all the cycles in $G$ are of even length then $\chi_{la}(G) \ge 2$ and if at least one of the cycles is of odd length then $\chi_{la}(G) \ge 3$.
\begin{figure}[ht]
    \centering
\begin{tikzpicture}[scale=0.7]
\draw[fill=black] (0,0) circle (3pt);
\draw[fill=black] (2,2) circle (3pt);
\draw[fill=black] (4,0) circle (3pt);
\draw[fill=black] (2,-2) circle (3pt);
\draw[fill=black] (0,0) circle (3pt);
\draw[fill=black] (6,2) circle (3pt);
\draw[fill=black] (8,0) circle (3pt);
\draw[fill=black] (6,-2) circle (3pt);
\draw[fill=black] (10,2) circle (3pt);
\draw[fill=black] (12,0) circle (3pt);
\draw[fill=black] (10,-2) circle (3pt);

\draw [thick] (0,0) -- (2,2) -- (4,0) -- (6,2) --(8,0) -- (10,2) --(12,0)-- (10,-2) -- (8,0) -- (6,-2) -- (4,0) -- (2,-2) -- (0,0);
\node[fill=white] at (1, 1) {$2$};
\node[fill = white] at (3, 1) {$12$};
\node[fill = white] at (3, 1) {$12$};
\node[fill = white] at (5, 1) {$1$};
\node[fill = white] at (7, 1) {$7$};
\node[fill = white] at (9, 1) {$9$};
\node[fill = white] at (11, 1) {$5$};
\node[fill = white] at (11, -1) {$3$};
\node[fill = white] at (9, -1) {$11$};
\node[fill = white] at (7, -1) {$4$};
\node[fill = white] at (5, -1) {$10$};
\node[fill = white] at (3, -1) {$8$};
\node[fill = white] at (1, -1) {$6$};
\end{tikzpicture}
    \caption{A necklace graph $G$ with $\chi_{la}(G) =3$.}
    \label{fig:necklace3c4}
\end{figure}
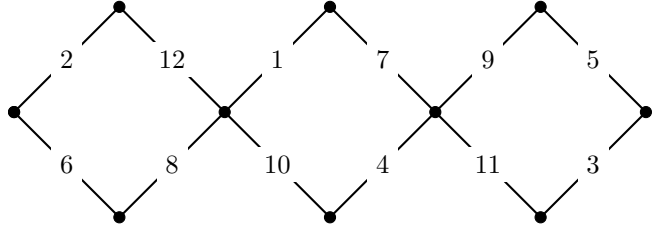
\section{Some Other Results}
Arumugam et al.\cite{lac_corona_arumugam} and Premalatha et al.\cite{lac_tree_premalatha} studied the local antimagic chromatic number of corona product $P_n \circ \overline{K}_m$, $C_n \circ \overline{K}_m$  and Setiawan et al.\cite{lac_corona_sugeng} have studied it for corona product $P_m \circ P_k$. The following theorem gives the bounds on the local antimagic chromatic number of corona product $G \circ \overline{K}_m$ for any graph $G$.

\begin{theorem}
Let $G$ be a graph with $p$ vertices and $q$ edges such that $\chi_{la}(G) = r$, if $m\equiv p(\bmod\ 2)$, 
then  $mp + 1 \leq \chi_{la}(G\circ \overline{K}_m)\leq mp + r$.
\end{theorem}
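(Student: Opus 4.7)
The lower bound is essentially immediate: $G \circ \overline{K}_m$ contains exactly $mp$ pendant vertices (the copies of $\overline{K}_m$ attached to each vertex of $G$), so Theorem~\ref{th:lac_bound} yields $\chi_{la}(G \circ \overline{K}_m) \ge mp + 1$.

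For the upper bound, my plan is to glue together an optimal local antimagic labeling of $G$ with a magic rectangle that spreads labels evenly across the pendant edges. Let $g \colon E(G) \to \{1, 2, \dots, q\}$ be a local antimagic labeling realizing the minimum $r$ weights, and let $v_1, \dots, v_p$ be the vertices of $G$ with pendant neighbors $u_{j,1}, \dots, u_{j,m}$ at $v_j$. The parity hypothesis $m \equiv p \pmod 2$ is precisely the condition required by Theorem~\ref{mr} for the existence of a magic rectangle $MR(m, p)$, whose columns each sum to $\sigma = \frac{m(mp+1)}{2}$.

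I would then define the labeling $f$ on $G \circ \overline{K}_m$ by taking $f = g$ on edges of $G$ and labeling the $m$ pendant edges at $v_j$ with the $j$-th column of $MR(m, p)$ after adding $q$ to every entry. This uses each integer in $\{1, 2, \dots, q + mp\}$ exactly once. Each pendant $u_{j,k}$ inherits the label of its unique incident edge, giving $mp$ pairwise distinct pendant weights contained in $\{q+1, \dots, q+mp\}$. Each original vertex has weight $w(v_j) = g(v_j) + (\sigma + qm)$, so these vertices realize at most $r$ distinct weights and adjacent ones keep their distinctness straight from $g$. A short estimate shows $g(v_j) + \sigma + qm > q + mp$ whenever $m \ge 2$, so the pendant weights and original weights occupy disjoint ranges. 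Counting gives at most $mp + r$ colors, proving the upper bound.

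The main obstacle is aligning the hypothesis cleanly with Theorem~\ref{mr}: the parity condition $m \equiv p \pmod 2$ is the exact Harmuth condition, which is why it appears in the statement. A secondary bookkeeping issue is the handful of degenerate configurations excluded by Theorem~\ref{mr} (the cases $m = 1$, $p = 1$, or $(m,p) = (2,2)$), where no magic rectangle exists; these must be treated by direct ad hoc arguments, but the same bound $mp + r$ is easy to recover in each case since only very few pendant labels need to be placed.
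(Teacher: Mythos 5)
Your proof is correct and follows essentially the same route as the paper's: keep an optimal local antimagic labeling on $E(G)$, label the $m$ pendant edges at each $v_j$ with the $j$-th column of $MR(m,p)$ shifted by $q$, and observe that the support-vertex weights (each increased by the constant column sum plus $mq$) exceed all pendant weights, giving $mp+r$ colors, with the lower bound from Theorem~\ref{th:lac_bound}. Your closing remark about the degenerate cases excluded by Theorem~\ref{mr} (such as $m=1$ or $(m,p)=(2,2)$, where no magic rectangle exists) identifies a detail the paper's proof silently passes over.
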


\begin{proof}
Since $\chi_{la}(G) = r$, there is a local antimagic bijection $f:E\to \{1,2,\dots,q \}$ with $r$ distinct weights. Further since $m\equiv p(\bmod\ 2)$, there exits a magic rectangle $MR(m,p)$ of order $m\times p$. Let $C_1, C_2,\dots,C_p$  be the $p$ columns of the magic rectangle $MR(m,p)$. Let $ u_1,u_2,\dots, u_p$ be the vertices and $e_1, e_2,\dots,e_q$ be the edges of the graph $G$. Let $v_i^{j}$ be the pendent vertices adjacent to the vertex $u_i$, $1\leq j\leq m$, $1\leq i\leq p$.\\

We define an edge labeling $g:G\circ \overline{K}_m\to \{1,2,\dots, q+mp \}$ by  $g(e_i)= f(e_i)$ and $g(u_iv_i^{j})= q + c_{ij}$, where $c_{i,j}$ is the $(i,j)$th entry of $MR(m,p)$.
Now weights of the vertices under $g$ are $w_g(u_i)= w_f(u_i)+ \frac{m(mp+1)}{2}+mq$, $w_g(v_i^{j}) = g(u_iv_i^{j})$. For a fixed $i$, $w_{g}(u_{i}) > g(v_{i}^{j}) = w_{g}(v_{i}^{j})$. Thus, we have $r+mp$ distinct weights.
  Hence $\chi_{la}(G\circ \overline{K_m})\leq r + mp$. Since there are $mp$ pendent vertices, by Theorem \ref{th:lac_bound}, $\chi_{la}(G\circ \overline{K_m})\geq mp + 1$. This proves the theorem.
\end{proof}
The bound in the Theorem 4.1 is not sharp e.g. $\chi_{la}(G \circ \overline{K_m})\geq mp + 2$ when $G$ is a path on $p$ vertices (Theorem 2.14, \cite{lac_corona_arumugam}). Also, $\chi_{la}(K_n \circ \overline{K_m}) = mn + n = |V(K_n \circ \overline{K_m})|$ for $m \ge 2, n \ge 3$. Therefore, the characterization of graphs $G$ on $p$ vertices for which $\chi_{la}(G \circ \overline{K_m}) = mp + k$, where $1 \le k \le mp$ is an open problem and it will appear in the subsequent papers.\\

We know that the order of a clique $G'$ of a graph $G$ is the lower bound of $\chi(G)$. A similar result holds for local antimagic chromatic number, as illustrated in the following lemma.
\begin{lemma} \label{clique}
If a graph $G$ contain a $k$-clique then $\chi_{la}(G)\geq k$.
\end{lemma}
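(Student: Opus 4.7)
The plan is to mimic the standard clique lower bound argument for the ordinary chromatic number, transplanted into the local antimagic setting. Let $f$ be any local antimagic labeling of $G$, and let $H$ be a $k$-clique contained in $G$ with vertex set $\{u_1, u_2, \dots, u_k\}$. Every pair $u_i, u_j$ with $i \ne j$ is adjacent in $G$, so by definition of local antimagic labeling the induced weights satisfy $w_f(u_i) \ne w_f(u_j)$ for all $i \ne j$.

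Consequently the multiset $\{w_f(u_1), \dots, w_f(u_k)\}$ consists of $k$ pairwise distinct values, so the coloring induced by $f$ uses at least $k$ distinct colors on $V(G)$. Taking the minimum over all local antimagic labelings $f$ of $G$ yields $\chi_{la}(G) \ge k$.

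There is essentially no obstacle here: the argument is purely a translation of the definition of proper vertex coloring restricted to the clique, and no edge labels or magic-type constructions need be invoked. The only thing to be careful about is to phrase the conclusion in terms of $\chi_{la}(G)$ as the minimum over all local antimagic labelings, rather than for a single fixed labeling.
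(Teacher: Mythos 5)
Your proof is correct and follows essentially the same approach as the paper: since every two vertices of the $k$-clique are adjacent, any local antimagic labeling must assign them pairwise distinct weights, forcing at least $k$ colors. Your remark about taking the minimum over all labelings is a sensible precaution, but the argument is otherwise identical.
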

\begin{proof}
Let $G^\prime$ be a k- clique in $G$ and let $f $ be a local antimagic labeling of $G$.
Since every vertex $v_i \in G^\prime$ is adjacent to $k-1$ other vertices and $f$ is local antimagic labeling of $G$, it follows that for every vertex pair $v_i, v_j \in G^\prime$, $ w(v_i)\neq w(v_j)$. Therefore the weights of the vertices of $G^\prime$ under $f$ are distinct, hence $\chi_{la}(G)\geq k$.
\end{proof}
\begin{lemma} \label{sub}
Let $G$ be a graph with vertex $v$ such that $deg(v) = \Delta(G) \ge 2$. Then, there is a subgraph $H$ of $G$ such that $\chi_{la}(H) = \Delta(G)+1$.
\end{lemma}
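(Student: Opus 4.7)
The plan is to exhibit an explicit subgraph $H$ of $G$ and verify both the lower and upper bounds $\chi_{la}(H) \ge \Delta(G)+1$ and $\chi_{la}(H) \le \Delta(G)+1$. The natural candidate is the star centered at $v$: let $\Delta = \Delta(G)$, let $N(v) = \{u_1, u_2, \dots, u_\Delta\}$, and take $H$ to be the subgraph of $G$ with vertex set $\{v\} \cup N(v)$ and edge set $\{vu_i : 1 \le i \le \Delta\}$. Then $H \cong K_{1,\Delta}$.

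For the lower bound, observe that each $u_i$ is a pendant vertex in $H$, so $H$ has exactly $\Delta$ pendants; since $\Delta \ge 2$, $H$ is not $K_2$, and Theorem~\ref{th:lac_bound} immediately gives $\chi_{la}(H) \ge \Delta + 1$.

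For the upper bound, I would define a local antimagic labeling $f : E(H) \to \{1, 2, \dots, \Delta\}$ by $f(vu_i) = i$. Each pendant weight is $w(u_i) = f(vu_i) = i$, producing $\Delta$ pairwise distinct leaf-weights in $\{1, \dots, \Delta\}$. The central weight is $w(v) = \sum_{i=1}^{\Delta} i = \Delta(\Delta+1)/2$, which for $\Delta \ge 2$ satisfies $w(v) > \Delta \ge w(u_i)$ for every $i$. Hence $w(v)$ differs from every pendant weight, the labeling is local antimagic (since $v$ is the only neighbor of each $u_i$), and it induces exactly $\Delta + 1$ distinct weights. Combining the two inequalities yields $\chi_{la}(H) = \Delta + 1$, as required.

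No significant obstacle is expected: the choice of $H$ is dictated by the statement (we need a subgraph achieving $\Delta+1$, and Theorem~\ref{th:lac_bound} together with the presence of many pendants at $v$ strongly suggests a star); the only detail to watch is the case $\Delta = 2$, where we must verify $w(v) = 3 \ne 1, 2$, which is clear. Everything else is a routine arithmetic check.
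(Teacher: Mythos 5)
Your proposal is correct and follows essentially the same route as the paper: both take $H$ to be the star $K_{1,\Delta}$ centered at $v$ and conclude $\chi_{la}(H)=\Delta(G)+1$. The only difference is that the paper simply invokes the known value $\chi_{la}(K_{1,\Delta})=\Delta+1$ (quoted later from Lau et al.\ as $\chi_{la}(K_{p,q})=q+1$ for $q>p=1$), whereas you verify it directly via the pendant bound of Theorem~\ref{th:lac_bound} and an explicit labeling; both verifications are sound.
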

\begin{proof}
Let $G$ be a graph with vertex $v$ such that $deg(v) = \Delta(G)$. We consider a subgraph $H$ with vertex set $ \{v, v_i: vv_i \in E(G)\}$ and edge set $\{vv_i : vv_i \in E(G)\}$. Since $H$ is a star, $\chi_{la}(H) = \Delta(G)+1$.
\end{proof} 
We know that for a given subgraph $H$ of a graph $G$, $\chi(H) \le \chi(G)$. But this need not be the case in local antimagic chromatic numbers. Using Lemma  \ref{sub}, we give some explicit examples where the inequality does not hold. Lau et. al \cite{lac_lau} calculated the local antimgaic chromatic number of a bipartite graph and wheel:\\

\begin{align*}
\chi_{la}(K_{p,q}) &= 
\begin{cases} 
q+1 & \text{if } q>p=1\\
2 \quad & \text{if }  q > p \ge 2 \text{ and } p \equiv q(\bmod\ 2)\\
3 \quad &  \mbox{otherwise}\\
\end{cases}\\
\chi_{la}(W_{n}) &= 
\begin{cases} 
4 \quad & \text{if }  n \equiv 0 \text{ or } 1 \text{ or } 3(\bmod\ 4) \\
3 \quad & \mbox{otherwise}.
\end{cases}
\end{align*}

For $q > p \ge 2$ using construction given in Lemma~\ref{sub} with $G \cong K_{p,q}$ we obtain subgraph $H$ of $K_{p,q}$ with $\chi_{la}(H) = q + 1 > 3 \ge \chi_{la}(K_{p,q})$. Similarly for $G \cong W_n$, where $n \ge 5$, we obtain subgraph $H$ of $W_n$ such that $\chi_{la}(H) = n + 1 > 4 \ge \chi_{la}(W_n)$.\\

We pose the following problem.\\
\begin{problem}
Characterise graphs $G$ not containing $K_2$ components for which $\chi_{la}(H) \le \chi_{la}(G)$, for all connected subgraphs $H$ (not containing $K_2$ components) of $G$.
\end{problem}

The requirement for the subgraph to be connected is indispensable, as neglecting it could lead to straightforward counterexamples. Such counterexamples arise from graphs in the form of cycles of large lengths and subgraphs as the vertex-disjoint union of paths. Additionally, considering Lemma \ref{sub}, it's intuitive to consider graphs $G$ where the maximum degree is less than $\chi_{la}(G)$. However, this assumption doesn't hold true in all cases (see Example 2.5 in \cite{lac_cut_lau}).

\section{Construction}
In \cite{lac_arumugam} authors raised a question of characterising the graphs having the same chromatic and local antimagic chromatic number. Since, then a few examples are known where chromatic and local antimagic chromatic numbers are the same (Example 2.5 in \cite{lac_cut_lau}, Theorem 2.5 in \cite{lac_cycle2_lau}). The class of graphs having the same chromatic number and local antimagic chromatic number is rich. In this section, we give a recursive method to construct infinitely many graphs $\{G_i\}$ such that $\chi(G_i) = \chi_{la}(G_i)$ from the given graph $G$ with $\chi(G) = \chi_{la}(G)$. \\
\\
\noindent \textbf{Construction:}\\

Let $G$ be a local antimagic graph with local antimagic labeling $f_0$ such that $|E(G)| = m_0$ satisfying $\chi_{la} (G)= \chi(G)$. Let $|V(G)| = n \ge 4$ be even.\\

Let $q$ be a positive integer and consider $G_0 = G$. For each $i \ge 1$ consider $G_{i} = G_{i-1} + \overline{K}_q$, where $V(\overline{K}_q) = \{u_1, u_2, \dots, u_q\}$. Observe that $|E(G_i)| = m + \sum_{j=1}^{i} (n+(j-1)q)q = m_i$ (say) and that
\begin{equation} \label{eq:1}
\chi(G_i) = \chi(G_{i-1}) + 1.    
\end{equation}
First we show that $\chi_{la}(G_{i}) \le \chi_{la}(G_{i-1}) + 1$. Since we know that, if  $n \equiv q(\bmod\ 2)$, $n+(i-1)q \equiv q(\bmod\ 2)$ for each $i$, then there exists a magic rectangle $MR(n+(i-1)q, q)$. Therefore we assume that $q$ is even. Add $m_{i-1}$ to each entry of $MR(n+(i-1)q, q)$ to obtain a new magic rectangle $MR'$ of the same size in which row sum $(\rho)$, and column sum $(\sigma)$ are constant. Label the edges from $u_j$ to $V(G_{i-1})$ by $i$th column of $MR'$. Then $w(u_j) = \sigma$ for each $j$ and $w_{G_{i}}(x) = w_{G_{i-1}}(x) + \rho, \; \forall x \in V(G_{i-1})$. Since $q$ is even, we can choose $q$ so that $w_{G_{i}}(x) > w(u_{j})$ for all $i$ and $j$. This proves that $\chi_{la}(G_{i}) \le \chi_{la}(G_{i-1}) + 1$.\\

Now we show that $\chi(G_i) = \chi_{la}(G_i)$ for each $i$ using induction on $i$. For $i = 0$, the result is trivial. Suppose result is true for $i = t$ i.e. $\chi(G_t) = \chi_{la}(G_t)$. Then
\begin{align*}
\chi(G_{t+1}) &= \chi(G_{t} + \overline{K}_q)\\
              &= \chi(G_t)+1\\
              &= \chi_{la}(G_t)+1\\
              &\ge \chi_{la}(G_{t+1})
\end{align*}
and $\chi(G_{t+1}) = \chi(G_{t}) + 1 = \chi(G_{t} + \overline{K}_q) \le \chi_{la}(G_{t} + \overline{K}_q)$. This proves that $\chi(G_{t+1}) = \chi_{la}(G_{t+1})$. Hence by induction, the result is true for all $i \ge 0$. Thus, $\{G_i\}_i$ is the required sequence of graphs satisfying the property $\chi(G_i) = \chi_{la}(G_{i})$ for each $i$.\\

Now, we give one application of the above construction to calculate the local antimagic chromatic number of $r$-partite graphs in a particular case.
Let $t_1$ and $t_2$ be two integers such that $t_1 > t_2 \ge 2$ and $t_1 \equiv t_2(\bmod\ 2)$. Then $\chi_{la}(K_{t_{1}, t_{2}}) = 2$ (see, \cite{lac_lau}). Since $t_1 \equiv t_2(\bmod\ 2)$ and  $n = t_1 + t_2$ is even, therefore for each even $t_3$, we have $\chi_{la}(K_{t_1, t_2, t_3}) = 3$. Recursively applying above construction for the suitable choices of $t_i$s, we obtain $\chi_{la}(K_{t_1,t_2, \dots, t_r}) = r = \chi(K_{t_1,t_2, \dots, t_r})$.

\section{Conclusion and Scope} 
In this paper, we obtained the local antimagic chromatic number for unions of some graphs and a few others. There are infinitely many graphs $G$ for which $\chi_{la}(G) =  \chi(G)$, but a complete characterization is yet to be discovered.
\section{Acknowledgement}
This research is supported by the \textit{Science and Engineering Research Board} (Ref. No. CRG/2018/002536), Govt. of India.
\bibliographystyle{amsplain} 
\bibliography{refs} 
\end{document}